\newtheorem{theorem}{Theorem}[section]
\newtheorem{lemma}{Lemma}[section]
\newtheorem{corollary}{Corollary}[section]
\newtheorem{proposition}{Proposition}[section]
\newtheorem{remark}{Remark}[section]
\newcounter{theor}
\DeclareMathOperator{\inter}{int}
\def\pos{\mathop\mathrm{pos}\nolimits}
\def\span{\mathop\mathrm{span}\nolimits}
\def\conv{\mathop\mathrm{conv}\nolimits}
\def\bd{\mathop\mathrm{bd}\nolimits}
\def\s{\mathbb{S}}
\def\S{\mathcal{S}}
\def\rd{\widetilde{\mathcal{R}}}
\def\R{\mathbb{R}}
\def\N{\mathbb{N}}
\def\V{\mathrm{V}}
\def\C{\mathbb{C}}
\def\W{\mathrm{W}}
\def\Wd{\widetilde{\mathrm{W}}}
\def\Wdk#1#2{\widetilde{\mathrm{W}}^{^{(#1)}}_{#2}\!}
\def\e{\mathrm{e}}
\def\im{\mathrm{i}}
\def\sy#1#2{\sigma_{#1}\left(#2\right)}
\def\Real{\mathop\mathrm{Re}\nolimits}
\def\Im{\mathop\mathrm{Im}\nolimits}
\def\f#1#2{\widetilde{f}_{#1,#2}}
\newcommand{\dlat}{\mathrm{d}}
\newcommand{\bigdotcup}[1][0pt]{\mathaccent\cdot{}\kern-#1\bigcup}
\numberwithin{equation}{section}
\begin{document}

\title[A characterization of dual querma\ss integrals]%
{A characterization of dual querma\ss integrals and the roots of dual
Steiner polynomials}

\author[D. Alonso-Guti\'errez]{David Alonso-Guti\'errez}
\address{Departamento de Matem\'aticas, Universidad de Zaragoza, C/ Pedro Cerbuna 12,
50009-Zaragoza, Spain} \email{alonsod@unizar.es}

\author[M. Henk]{Martin Henk}
\address{Institut f\"ur Mathematik, Technische Universit\"at Berlin, Stra\ss e des 17. Juni
136, D-10623 Berlin, Germany} \email{henk@math.tu-berlin.de}

\author[M. A. Hern\'andez Cifre]{Mar\'\i a A. Hern\'andez Cifre}
\address{Departamento de Matem\'aticas, Universidad de Murcia, Campus de
Espinar\-do, 30100-Murcia, Spain} \email{mhcifre@um.es}

\thanks{Supported by: MINECO/FEDER projects MTM2016-77710-P and
MTM2015-65430-P; ``Programa de Ayudas a Grupos de Excelencia de la
Regi\'on de Murcia'', Fundaci\'on S\'eneca, 19901/GERM/15.}

\subjclass[2000]{Primary 52A30, 52A39; Secondary 30C15}

\keywords{Dual querma\ss integrals, dual Steiner polynomials, location of
roots, moment problem}

\begin{abstract}
For any $I\subset\R$ finite with $0\in I$, we provide a characterization
of those tuples $(\omega_i)_{i\in I}$ of positive numbers which are dual
querma\ss integrals of two star bodies. It turns out that this problem is
related to the moment problem. Based on this relation we also get new
inequalities for the dual querma\ss integrals. Moreover, the above
characterization will be the key tool in order to investigate structural
properties of the set of roots of dual Steiner polynomials of star bodies.
\end{abstract}

\maketitle

\section{Introduction and notation}

A subset $S$ of the $n$-dimensional Euclidean space $\R^n$ is called {\em
starshaped} (with respect to the origin $0$) if $S\neq\emptyset$ and the
segment $[0,x]\subseteq S$ for all $x\in S$. For a compact starshaped set
$K$ its {\em radial function} $\rho_K$ is defined by
\[
\rho_K(u)=\max\{\lambda\geq 0:\lambda u\in K\}, \quad u\in\s^{n-1}.
\]
Moreover, a {\em star body} is a compact starshaped set with positive and
continuous radial function. We observe that this property implies that any
star body has non-empty interior. We will denote by $\S^n_0$ the set of
all star bodies in $\R^n$. In particular, convex bodies (compact and
convex sets) containing the origin in its interior are star bodies, and we
write $B^n_2$ to denote the $n$-dimensional unit ball. The volume of a set
$M\subset\R^n$, i.e., its $n$-dimensional Lebesgue measure, is denoted by
$|M|$, or $|M|_n$ if the distinction of the dimension is needed.
Furthermore, we write $\bd M$ and $\inter M$ to represent the boundary and
the interior of $M$, and we use $\conv M$ and $\pos M$ for its convex and
positive hulls, respectively.

For two convex bodies $K,L$ and a non-negative real number $\lambda$, the
volume of the Minkowski sum (vectorial addition) $K+\lambda\,L$ is
expressed as a polynomial of degree at most $n$ in $\lambda$ (see
\cite{Ste40}), and it is written as
\begin{equation}\label{eq:steiner-minkowski}
|K+\lambda L|=\sum_{i=0}^n\binom{n}{i}\W_i(K,L)\lambda^i.
\end{equation}
This expression is called {\em relative Steiner formula} of $K$, and the
coefficients $\W_i(K,L)$ are the {\em relative querma\ss integrals} of
$K$, special cases of the more generally defined {\em mixed volumes} (see
e.g. \cite[s.~5.1]{Sch}).

{\em Dual Brunn-Minkowski theory} goes back to Lutwak \cite{Lut1,Lut2},
and it is a cornerstone of modern convex geometry. For the immense impact
of this theory we refer e.g. to \cite{BHP, HLYZ, Sch} and the references
inside. In this context, and among others, convex bodies are replaced by
star bodies, the Minkowski sum by the radial addition and the support
function by the radial function. For $x,y\in\R^n$, the radial addition
$x\widetilde{+}y$ is defined as
\[
x\widetilde{+}y=\left\{\begin{array}{ll}
 x+y & \;\text{ if $x,y$ are linearly dependent,}\\
 0 & \;\text{ otherwise.}
\end{array}\right.
\]
Then, the {\em radial sum} $K\widetilde{+}L$ for $K,L\in\S^n_0$ is defined
by
\[
K\widetilde{+}L=\left\{x\widetilde{+}y:x\in K,\,y\in L\right\},
\]
and has the property that, for $\lambda,\mu\geq 0$,
\[
\rho_{\mu K\widetilde{+}\lambda L}=\mu\rho_K+\lambda\rho_L.
\]
As in the classical case, the volume of the radial sum
$K\widetilde{+}\lambda L$ is also expressed as a polynomial of degree $n$
in $\lambda$ (see e.g. \cite[p.~508]{Sch}),
\begin{equation}\label{eq:dual_steiner}
\bigl|K\widetilde{+}\lambda L\bigr|
=\sum_{i=0}^n\binom{n}{i}\Wd_i(K,L)\lambda^i.
\end{equation}
This expression is known as {\em dual Steiner formula} of $K$. The
coefficients $\Wd_i(K,L)$ are the {\em dual querma\ss integrals} of $K$
and $L$, and can be expressed in terms of their radial functions as
\begin{equation}\label{eq:dual_quermass}
\Wd_i(K,L)=\dfrac{1}{n}\int_{\s^{n-1}}\rho_K(u)^{n-i}\rho_L(u)^i\,\dlat\sigma(u).
\end{equation}
Here $\sigma$ is the usual spherical Lebesgue measure. In particular, the
use of spherical coordinates immediately yields $\Wd_0(K,L)=|K|$,
$\Wd_n(K,L)=|L|$ and $2\Wd_{n-1}(K,B_2^n)/|B_2^n|$ is the average length
of chords of $K$ through the origin. Moreover, and in contrast to the
classical querma\ss integrals, the dual ones can be defined via
\eqref{eq:dual_quermass} for any real index $i\in\R$. Dual querma\ss
integrals are particular cases of the dual mixed volumes, which were
introduced for the first time by Lutwak in \cite{Lut1} (see also
\cite[s.~9.3]{Sch}).

In \cite{HHCS} it was studied the problem whether Steiner polynomials can
be characterized, i.e.: given a polynomial $f(z)=\sum_{i=0}^na_iz^i$,
$a_i\geq 0$, is it a Steiner polynomial for a pair of convex bodies? This
question is equivalent to know when a set of $n+1$ non-negative real
numbers $\W_0,\dots,\W_n\geq 0$ arises as the set of relative querma\ss
integrals of two convex bodies. Shephard proved in \cite{She} that it
suffices that the numbers satisfy the well-known Aleksandrov-Fenchel
inequalities (see e.g. \cite[(9.40)]{Sch}) in order to be querma\ss
integrals of convex bodies (see also \cite{HHCS}).

This question is a particular case of a beautiful problem which is still
open: Given $r\geq 2$ convex bodies $K_1,\dots,K_r\subset\R^n$, there are
$N=\binom{n+r-1}{n}$ mixed volumes $\V(K_{i_1},\dots,K_{i_n})$, $1\leq
i_1\leq\dots\leq i_n\leq r$; for the definition and a deep study on mixed
volumes we refer to \cite[s.~5.1]{Sch}. Then, a set of inequalities is
said to be a {\it full set} if given $N$ (non-negative) numbers satisfying
the inequalities, they arise as the mixed volumes of $r$ convex bodies.
For $n=2$ and $r=3$, Heine \cite{Hei} proved that the Aleksandrov-Fenchel
inequalities together with the determinantal inequality
$\det\bigl(\V(K_i,K_j)_{i,j=1}^3\bigr)\geq 0$ are a full set. For $n\geq
2$, Shephard \cite{She} investigated whether the known inequalities
(Aleksandrov-Fenchel and some determinantal inequalities) are a full set,
and solved it for $r=2$. Moreover, he showed that for $r=n+2$ they do not
form a full set. For arbitrary $r$, the problem is still open.

The main aim of this paper is to consider the corresponding question in
the dual setting, i.e., to look for necessary and sufficient conditions
for $n+1$ positive real numbers to be the dual querma\ss integrals of two
star bodies in $\R^n$. The first substantial difference with the classical
case is that dual querma\ss integrals can be defined for any real index.
Hence we provide, for any finite subset $I\subset\R$ with $0\in I$, a
characterization of those tuples of positive numbers $(\omega_i)_{i\in I}$
which are the dual querma\ss integrals of two {\it $n$-dimensional} star
bodies.

In order to state our result we need the following notation: for $0<a<b$
and for an index set $I=\{0,i_1,\dots,i_m\}\subset\R$ of cardinality
$\#I=m+1$, we write
\begin{equation}
C_{a,b}^I=\pos\bigl\{(1,t^{i_1},t^{i_2},\dots,t^{i_m}):t\in[a,b]\bigr\}\subset\R^{m+1}.
\end{equation}
From now on and for the sake of brevity, any index set
$I=\{0,i_1,\dots,i_m\}\subset\R$ will be assumed to have cardinality
$\#I=m+1$.

\begin{theorem}\label{CharacterizationDualMixedVolumes1}
Let $I=\{0,i_1,\dots,i_m\}\subset\R$, let $(\omega_i)_{i\in I}$ be a
sequence of $m+1$ positive numbers and let $n\geq 2$. Then there exist
star bodies $K,L\in\S^n_0$ such that
\[
\Wd_i(K,L)=\omega_i,\quad\text{ for all }\;i\in I,
\]
if and only if either there exist $0<a<b$ such that
\[
(\omega_0,\omega_{i_1},\dots,\omega_{i_m})\in\inter C_{a,b}^I,
\]
or $\omega_i=\lambda^i\omega_0$ for some $\lambda>0$ and every $i\in I$;
in this case $L=\lambda K$.
\end{theorem}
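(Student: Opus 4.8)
The plan is to recognize that, by \eqref{eq:dual_quermass}, finding star bodies $K,L\in\S^n_0$ with prescribed $\Wd_i(K,L)$ for $i\in I$ is the same as finding a suitable function on the sphere together with a ``density'' $t(u)=\rho_L(u)/\rho_K(u)$. Concretely, set $g(u)=\rho_K(u)^n/n$ and $t(u)=\rho_L(u)/\rho_K(u)$, so that
\[
\Wd_i(K,L)=\int_{\s^{n-1}}t(u)^i\,g(u)\,\dlat\sigma(u).
\]
Thus the prescribed tuple $(\omega_i)_{i\in I}$ is realizable exactly when there is a positive continuous function $t$ on $\s^{n-1}$ and a positive continuous $g$ with $\omega_i=\int t^i g\,\dlat\sigma$ for all $i\in I$. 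Pushing forward the measure $g\,\dlat\sigma$ under $t$ turns this into a one-dimensional \emph{moment problem}: the tuple is realizable iff there is a positive measure $\mu$ on $(0,\infty)$ (with suitable support/regularity coming from continuity and positivity of $\rho_K,\rho_L$) such that $\omega_i=\int_0^\infty s^i\,\dlat\mu(s)$ for every $i\in I$. The two alternatives in the theorem then correspond to: $\mu$ genuinely spread out on an interval $[a,b]$ (the ``interior of $C_{a,b}^I$'' case), versus $\mu$ a point mass at $\lambda$ (the degenerate case $L=\lambda K$).

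I would organize the argument as follows. \emph{Step 1 (necessity).} Given $K,L$, put $t=\rho_L/\rho_K\in C(\s^{n-1})$, which is positive, hence takes values in some $[a_0,b_0]\subset(0,\infty)$; pick $a<a_0\le b_0<b$. Since $\s^{n-1}$ is connected and $t$ continuous, either $t$ is constant $=\lambda$, giving $\omega_i=\lambda^i\omega_0$ and $L=\lambda K$, or $t$ is nonconstant, in which case one shows $(\omega_i)_{i\in I}\in\inter C_{a,b}^I$. The latter is the point where I expect the real work: one must verify that the integral $\int (1,t^{i_1},\dots,t^{i_m})g\,\dlat\sigma$, which a priori lies in $C_{a,b}^I=\pos\{(1,t^{i_1},\dots,t^{i_m}):t\in[a,b]\}$, actually lands in the \emph{interior}. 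This follows from a Carath\'eodory/supporting-hyperplane argument: if the integral were on the boundary, there would be a linear functional $\ell$ vanishing there and nonnegative on the generating curve; then $\int \ell(1,t^{i_1},\dots,t^{i_m})g\,\dlat\sigma=0$ with the integrand $\ge 0$ forces $\ell(1,t(u)^{i_1},\dots,t(u)^{i_m})=0$ for a.e.\ $u$, so $t$ takes at most $m$ values on a set of full measure; combined with continuity and connectedness of $\s^{n-1}$ this forces $t$ constant, a contradiction. \emph{Step 2 (sufficiency).} In the constant case, $L=\lambda K$ with $K=B_2^n$ scaled so that $\omega_0=|K|$ works directly. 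In the interior case, $(\omega_i)_{i\in I}\in\inter C_{a,b}^I$ means the point is a \emph{positive} combination that can be realized using finitely many curve points $t_1,\dots,t_N\in(a,b)$ with positive weights $c_1,\dots,c_N$, i.e.\ $\omega_i=\sum_{j} c_j t_j^i$ for all $i\in I$, and being interior one may take the $t_j$ distinct and arrange at least two of them (so $t$ is nonconstant) and in fact take $N$ as large as needed. Then I would partition $\s^{n-1}$ into $N$ spherical regions of prescribed $\sigma$-measure (or, better, choose a continuous positive $g$ and a continuous $t$ taking values near the $t_j$) so that $\int t^i g\,\dlat\sigma=\omega_i$; concretely one can first build a \emph{measure} realizing the moments and then regularize it to get continuous positive radial functions without changing the (finitely many) prescribed integrals, using that $I$ is finite.

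The construction in Step 2 of honestly continuous, strictly positive radial functions $\rho_K,\rho_L$ (not merely $L^1$ densities) is the main obstacle, because the clean moment-problem picture naturally produces atomic or merely measurable data. I would handle it by a smoothing/perturbation lemma: start from a simple realizing configuration (step function $t$ on a measurable partition, $g$ constant), mollify $t$ and $g$ on $\s^{n-1}$ to make them continuous and positive, and correct the finitely many resulting integral values back to $(\omega_i)_{i\in I}$ by a small adjustment — possible because the map (continuous positive $t$, continuous positive $g$) $\mapsto (\int t^i g\,\dlat\sigma)_{i\in I}$ is an open map onto $\inter C_{a,b}^I$ for $b-a$ slightly enlarged, which itself follows from the interior-point characterization of $C_{a,b}^I$ together with a finite-dimensional implicit/open-mapping argument. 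A secondary technical point is the precise correspondence between ``$(\omega_i)_{i\in I}\in\inter C_{a,b}^I$ for some $0<a<b$'' and ``the sequence $(\omega_i)_{i\in I}$ is a moment sequence of a non-atomic measure supported in a compact subinterval of $(0,\infty)$'', which I would isolate as a lemma (this is where the connection to the classical truncated moment problem, and hence to the positivity conditions and new inequalities promised in the abstract, enters).
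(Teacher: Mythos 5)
Your overall strategy matches the paper's: by pushing forward the measure $(1/n)\rho_K^n\,\dlat\sigma$ under $f=\rho_L/\rho_K$ you reduce the realizability of $(\omega_i)_{i\in I}$ to a truncated moment problem on a compact subinterval of $(0,\infty)$, and the supporting-hyperplane argument you sketch for the necessity direction is essentially the paper's Lemma~\ref{lemmaMeasuresFullSupport}. One small caveat there: your step ``$\ell(1,t(u)^{i_1},\dots,t(u)^{i_m})=0$ a.e.\ forces $t$ to take at most $m$ values'' uses that $\{1,s^{i_1},\dots,s^{i_m}\}$ is a Chebyshev system on $(0,\infty)$ for arbitrary real exponents; this is true but should be noted, whereas the paper avoids it entirely by arguing only that the curve is not contained in a hyperplane, hence the nonnegative integrand $\langle x,u\rangle-\langle\alpha(t),u\rangle$ is strictly positive on some subinterval, contradicting full support of $\mu$.

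The genuine gap is in your sufficiency direction. You correctly identify that the obstacle is to produce honest continuous, strictly positive radial functions from a measure $\mu$ on $[a,b]$, but the ``mollify a step function and correct by an open-mapping argument'' plan is not carried out, and as written it is not clear it closes: mollification can move the range of $t$ outside $[a,b]$, the claimed openness of the moment map on the space of continuous pairs $(t,g)$ is asserted rather than proved, and you would still need to keep $t,g$ bounded away from $0$ after the correction. The paper dissolves the whole issue with a single explicit construction (Lemma~\ref{l:lemma1}): fix $K=cB_2^n$ with $c$ chosen so $|K|=\omega_0$ (so only $\rho_L$ needs to be built), take the survival function $F(t)=\mu([t,b])/\mu([a,b])$, its generalized inverse $G$, and set $\rho_L(u)=G\bigl(\sigma(\{v:|v_1|\ge|u_1|\})/\sigma(\s^{n-1})\bigr)$. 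Full support of $\mu$ (which Lemma~\ref{lemmaMeasuresFullSupport} supplies from the interior condition) makes $G$, hence $\rho_L$, continuous with values in $[a,b]$, and the layer-cake formula gives $\Wd_i(B_2^n,L)=m_i(\mu)$ exactly. This quantile-matching construction on the sphere is the missing ingredient in your proposal; without it, Step~2 is a plausible plan rather than a proof.
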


Our above characterization of dual querma\ss integrals is related to the
\emph{moment problem} (see e.g. \cite{KrNu}). In Section
\ref{s:dual_quermass} we study this relation, which will allow us to get,
in the particular case when $I=\{0,1,\dots,m\}\subset\N$, new inequalities
between dual querma\ss integrals as a direct consequence of the following
more general result.

\begin{theorem}\label{t:family_ineq_dual}
Let $K,L\in\S^n_0$ be two star bodies. Then, for every $m\in\N$ the Hankel
matrices
\[
A_m=\Bigl(\Wd_{i+j}(K,L)\Bigr)_{i,j=0}^m,\quad
B_m=\Bigl(\Wd_{i+j+1}(K,L)\Bigr)_{i,j=0}^{m-1}
\]
are positive definite.
\end{theorem}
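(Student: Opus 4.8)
The plan is to realize $\bigl(\Wd_k(K,L)\bigr)_{k}$ as the moment sequence of a finite measure and then to read off the positivity of the Hankel matrices from the moment--problem formalism. Set $\dlat\nu(u)=\tfrac1n\rho_K(u)^n\dlat\sigma(u)$, a finite Borel measure on $\s^{n-1}$, and let $t\colon\s^{n-1}\to(0,\infty)$ be given by $t(u)=\rho_L(u)/\rho_K(u)$; this is continuous and strictly positive since $K,L\in\S^n_0$. Because $n\ge 2$, the sphere $\s^{n-1}$ is connected, so $t(\s^{n-1})$ is a compact interval $[a,b]$ with $0<a\le b$, and we let $\mu$ be the image measure $t_{*}\nu$ on $[a,b]$. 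Rewriting \eqref{eq:dual_quermass} as $\Wd_k(K,L)=\tfrac1n\int_{\s^{n-1}}\rho_K(u)^n\,t(u)^k\dlat\sigma(u)$ and pushing forward under $t$ yields
\[
\Wd_k(K,L)=\int_{[a,b]}s^k\,\dlat\mu(s)\qquad\text{for every integer }k\ge 0 ,
\]
so the dual querma\ss integrals of $K$ and $L$ are the moments of $\mu$, a finite measure supported in $(0,\infty)$.

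Granting this, for $c=(c_0,\dots,c_m)\in\R^{m+1}$ and $d=(d_0,\dots,d_{m-1})\in\R^{m}$ one obtains
\[
c^{\top}A_m c=\int_{[a,b]}\Bigl(\sum_{k=0}^{m}c_ks^k\Bigr)^{2}\dlat\mu(s),\qquad
d^{\top}B_m d=\int_{[a,b]}s\Bigl(\sum_{k=0}^{m-1}d_ks^k\Bigr)^{2}\dlat\mu(s),
\]
which are manifestly non-negative (for the second one because $s\ge a>0$ on the support of $\mu$); this already gives positive semidefiniteness. For strict positive definiteness I would verify that $\mu$ has infinite support. Since $\rho_K$ is continuous and positive, $\nu$ assigns positive mass to every non-empty relatively open subset of $\s^{n-1}$; as $t$ is continuous, $t^{-1}(U)$ is such a set for every non-empty relatively open $U\subseteq[a,b]$, so $\mu(U)=\nu(t^{-1}(U))>0$. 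Hence $\mu$ has full support $[a,b]$, which is infinite precisely when $a<b$. A non-zero polynomial of degree at most $m$ vanishes off a cofinite set, hence has strictly positive integral against $\mu$, and the same is true after multiplying by $s\ge a>0$; applying this to $\sum_k c_ks^k$ and to $\sum_k d_ks^k$ shows $c^{\top}A_mc>0$ and $d^{\top}B_md>0$ for $c,d\ne 0$, that is, $A_m$ and $B_m$ are positive definite.

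The only genuinely delicate point --- and the obstacle one has to address --- is the degenerate case $a=b$ in the construction above: then $t$ equals the constant $\lambda:=a$, so $\rho_L=\lambda\rho_K$, hence $L=\lambda K$ and $\mu=\lvert K\rvert\,\delta_{\lambda}$ is a single atom, which makes $A_m$ a rank-one matrix (merely positive semidefinite) for $m\ge1$. This is exactly the second alternative occurring in Theorem~\ref{CharacterizationDualMixedVolumes1}, so the strict statement is to be read for pairs $K,L$ that are not dilates of one another (equivalently, $\rho_L/\rho_K$ non-constant); in that case the support argument applies verbatim and delivers positive definiteness for every $m\in\N$, while in the excluded case one still has positive semidefiniteness. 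Apart from resolving this dichotomy, the proof is routine moment--sequence bookkeeping and needs no further ideas.
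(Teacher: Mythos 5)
Your proof is correct and takes essentially the same route as the paper: rewrite $\Wd_k(K,L)$ as $\frac1n\int_{\s^{n-1}} (\rho_L/\rho_K)^k\rho_K^n\,\dlat\sigma$, push the measure $\frac1n\rho_K^n\,\dlat\sigma$ forward under $f=\rho_L/\rho_K$, read off the Hankel quadratic forms as $\int\bigl(\sum c_k s^k\bigr)^2\dlat\mu$ and $\int s\bigl(\sum d_k s^k\bigr)^2\dlat\mu$, and conclude positivity. The paper does this directly on the sphere without naming the push-forward, but it is the identical mechanism.

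Where you differ is in rigour, and your extra care is warranted. The paper asserts that $P_{1,m}(x)=\bigl(\sum_{i=0}^m a_ix^i\bigr)^2$ is ``positive for all $x\in(0,\infty)$'' and hence that the integral is strictly positive; that assertion is only true up to a null set, since the square vanishes at the (at most $m$) real roots of $\sum a_i x^i$. The strict inequality therefore needs the additional observation you supply: that the push-forward measure $\mu$ is not concentrated on a finite set. You verify this by showing $\mu$ has full support on the interval $[a,b]=f(\s^{n-1})$, which works precisely when $a<b$, i.e.\ when $f$ is non-constant. And you are right that when $f$ is constant (equivalently, $L=\lambda K$), the conclusion fails: in that case $\Wd_{i+j}=\lambda^{i+j}|K|$, so $A_m$ is the rank-one matrix $|K|\,vv^{\top}$ with $v=(1,\lambda,\dots,\lambda^m)$, which is positive semidefinite but not positive definite for $m\geq 1$. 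So the theorem as literally stated requires the implicit hypothesis that $K$ and $L$ are not dilates (otherwise the conclusion must be weakened to positive semidefiniteness); your proposal identifies this correctly, and the paper's own proof does not address it. Aside from flagging and handling this degenerate case, the two arguments are the same.
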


From the above theorem new {\it determinantal inequalities} for dual
querma\ss integrals are obtained.

\begin{corollary}\label{c:ineq_dual}
Let $K,L\in\S^n_0$ be two star bodies. We write $\Wd_i=\Wd_i(K,L)$ and
let, for every $m\in\N$,
\[
\Delta_m=\begin{pmatrix}
 \Wd_0 & \Wd_1 & \!\cdots\! & \Wd_m\\
\Wd_1 & \Wd_2 & \!\cdots\! & \Wd_{m+1}\\
\vdots & \vdots & \vdots & \vdots\\
\Wd_m & \Wd_{m+1} & \!\cdots\! & \Wd_{2m}
\end{pmatrix},\;\,
\Delta_m'=\begin{pmatrix}
 \Wd_1 & \Wd_2 & \!\cdots\! & \Wd_m\\
\Wd_2 & \Wd_3 & \!\cdots\! & \Wd_{m+1}\\
\vdots & \vdots & \vdots & \vdots\\
\Wd_m & \Wd_{m+1} & \!\cdots\! & \Wd_{2m-1}
\end{pmatrix}.
\]
Then we have the determinantal inequalities
\[
\det\Delta_m>0\quad\text{ and }\quad \det\Delta_m'>0.
\]
\end{corollary}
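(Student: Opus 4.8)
The plan is to deduce the determinantal inequalities directly from the positive definiteness of the Hankel matrices established in Theorem \ref{t:family_ineq_dual}. Recall that a positive definite matrix has strictly positive determinant, and moreover all of its leading principal minors are strictly positive. With the notation of the corollary, observe that $\Delta_m$ is precisely the Hankel matrix $A_m=\bigl(\Wd_{i+j}(K,L)\bigr)_{i,j=0}^m$ from Theorem \ref{t:family_ineq_dual}, since the $(i,j)$ entry of $\Delta_m$ (indexing rows and columns from $0$ to $m$) is $\Wd_{i+j}$. Hence $\det\Delta_m=\det A_m>0$ follows immediately from the positive definiteness of $A_m$.

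For the second family, note that $\Delta_m'$ is the $m\times m$ matrix whose $(i,j)$ entry is $\Wd_{i+j+1}$ for $i,j=0,\dots,m-1$; comparing with Theorem \ref{t:family_ineq_dual} this is exactly the Hankel matrix $B_m=\bigl(\Wd_{i+j+1}(K,L)\bigr)_{i,j=0}^{m-1}$. Since $B_m$ is positive definite, we get $\det\Delta_m'=\det B_m>0$ as well. This completes the argument.

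The main (and only) obstacle here is purely bookkeeping: one must align the index ranges in the statement of Theorem \ref{t:family_ineq_dual} with the sizes of $\Delta_m$ and $\Delta_m'$ displayed in the corollary, being careful that $\Delta_m$ has size $(m+1)\times(m+1)$ while $\Delta_m'$ has size $m\times m$, so that $\Delta_m$ corresponds to $A_m$ and $\Delta_m'$ corresponds to $B_m$ rather than to some minor of $A_{m+1}$. Once this identification is made, the inequalities are an immediate consequence of the elementary fact that positive definite matrices have positive determinant, and no further work is needed.
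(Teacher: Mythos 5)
Your proof is correct and is exactly the intended argument: the paper states the corollary as an immediate consequence of Theorem~\ref{t:family_ineq_dual} without further proof, and your identification of $\Delta_m$ with $A_m$ and $\Delta_m'$ with $B_m$, together with the fact that positive definite matrices have positive determinant, is precisely the (trivial) deduction the authors had in mind.
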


In the classical setting, the validity of a family of determinantal
inequalities for mixed volumes remains an open problem (see~\cite{BZ}).

\smallskip

In several recent articles (see e.g. \cite{HHCS} and the references
therein), the characterization of the querma\ss integrals of convex bodies
became a key tool in order to study properties of the roots of the
relative Steiner polynomial (regarded as a polynomial in a complex
variable, cf. \eqref{eq:steiner-minkowski}): structural properties of the
set of roots, convexity, closeness, monotonicity, stability, etc.

In this paper we also carry out the corresponding study for the roots of
{\em dual Steiner polynomials}. In the following we regard the right hand
side in \eqref{eq:dual_steiner} as a formal polynomial in a complex
variable $z\in\C$, which we denote~by
\[
\f{K}{L}(z)=\sum_{i=0}^n\binom{n}{i}\Wd_i(K,L)z^i.
\]
We observe that $0$ cannot be a root of any dual Steiner polynomial
because $\inter K\neq\emptyset$ for all $K\in\S^n_0$. Moreover, since
$\Wd_i(K,L)=\Wd_{n-i}(L,K)$ (see \eqref{eq:dual_quermass}), we have
$\f{K}{L}(z)=z^n\,\f{L}{K}(1/z)$, and thus, up to multiplication by real
constants, $\f{K}{L}(z)$ and $\f{L}{K}(z)$ have the same roots.

Here we are interested in the location and the structure of the roots of
$\f{K}{L}(z)$. To this end, let $\C^+=\{z\in\C:\Im(z)\geq 0\}$, and we
denote by $\R_{<0}$ and $\R_{\geq 0}$ the negative and non-negative real
axes, respectively. For any dimension $n\geq 2$, let
\begin{equation}\label{e:R_n}
\rd(n)=\bigl\{z\in\C^+: \f{K}{L}(z)=0 \text{ for some }K,L\in\S^n_o\bigr\}
\end{equation}
be the set of all roots of all dual Steiner polynomials in the upper
half-plane. We prove the following result.

\begin{theorem}\label{t:cone_half-open_monotone}
The set of roots $\rd(n)$ satisfies the following properties:
\begin{enumerate}\itemsep0pt
\item[(a)] It is a convex cone, containing the negative real axis.
\item[(b)] It is half-open, i.e., it does not contain the ray of the
boundary not consisting of $\R_{<0}$.
\item[(c)] It is monotonous in the dimension, i.e.,
$\rd(n)\subseteq\rd(n+1)$.
\end{enumerate}
\end{theorem}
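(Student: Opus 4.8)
The plan is to deduce from Theorem~\ref{CharacterizationDualMixedVolumes1} that $\rd(n)$ is, explicitly, a plane angular sector, and then to read off (a)--(c). Throughout put $I=\{0,1,\dots,n\}$. The algebraic key is the identity $\sum_{i=0}^n\binom{n}{i}(tz)^i=(1+tz)^n$. Given star bodies $K,L$, set $g=\rho_L/\rho_K$, a positive continuous function on $\s^{n-1}$, and let $\mu$ be the push-forward of $\tfrac1n\rho_K^n\,\dlat\sigma$ under $g$; then $\mu$ is a nonzero positive measure on a compact interval $[a,b]\subset(0,\infty)$ and, by~\eqref{eq:dual_quermass}, $\Wd_i(K,L)=\int t^i\,\dlat\mu(t)$, so
\[
\f{K}{L}(z)=\int_{[a,b]}(1+tz)^n\,\dlat\mu(t).
\]
Since the elements of $C_{a,b}^I$ are precisely the order-$n$ moment vectors of the positive measures on $[a,b]$, Theorem~\ref{CharacterizationDualMixedVolumes1} says that $(\Wd_0,\dots,\Wd_n)$ runs exactly over those moment vectors that are interior to $C_{a,b}^I$ for some $0<a<b$, together with the moment vectors of one-point measures (the case $L=\lambda K$, where $\f{K}{\lambda K}(z)=|K|\,(1+\lambda z)^n$ has $-1/\lambda\in\R_{<0}$ as its only root). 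Hence, for $z\in\C\setminus\R_{<0}$, one has $z\in\rd(n)$ if and only if $\int(1+tz)^n\,\dlat\mu=0$ for some positive measure $\mu$ on a compact subinterval of $(0,\infty)$ whose order-$n$ moment vector is interior to $C_{a,b}^I$ for suitable $a,b$.

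Next I would fix $z=r\e^{\im\varphi}$ with $r>0$ and $\varphi\in(0,\pi)$, and examine $\psi(t)=\arg(1+tz)$. A direct computation gives $\psi'(t)=\Im\dfrac{z}{1+tz}=\dfrac{r\sin\varphi}{|1+tz|^2}>0$, with $\psi(0^+)=0$ and $\psi(t)\to\varphi$ as $t\to\infty$; thus $t\mapsto n\psi(t)$ maps $(0,\infty)$ strictly increasingly onto $(0,n\varphi)$. If $n\varphi\le\pi$ then $n\psi(t)\in(0,\pi)$ for all $t>0$, so $\Im\bigl((1+tz)^n\bigr)=|1+tz|^n\sin\bigl(n\psi(t)\bigr)>0$ and hence $\Im\bigl(\int(1+tz)^n\,\dlat\mu\bigr)>0$ for every nonzero positive measure $\mu$; by the previous paragraph such a $z$ is not in $\rd(n)$. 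If $n\varphi>\pi$, choose $\alpha\in(0,n\varphi-\pi)$ and, using surjectivity, $0<t_1<t_2$ with $n\psi(t_1)=\alpha$ and $n\psi(t_2)=\alpha+\pi$; then $(1+t_1z)^n$ and $(1+t_2z)^n$ point in opposite directions, so $c_1(1+t_1z)^n+c_2(1+t_2z)^n=0$ for suitable $c_1,c_2>0$, and (modulo the realizability step described below) $z\in\rd(n)$. Together with $\R_{<0}\subseteq\rd(n)$, the fact that no $z\in\R_{\ge 0}$ is a root (all coefficients $\binom{n}{i}\Wd_i$ are positive), and that $0$ is never a root, this gives
\[
\rd(n)=\bigl\{z\in\C\setminus\{0\}:\arg z\in(\pi/n,\pi]\bigr\}.
\]

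Granting this description, the three assertions are immediate. For (a): $\arg(\lambda z)=\arg z$ for $\lambda>0$, so $\rd(n)$ is a cone; its angular opening equals $\pi-\pi/n<\pi$, so it is convex; and $\R_{<0}=\{\arg z=\pi\}$ is contained in it. For (b): the two boundary rays of the sector are $\R_{<0}$ and $\{r\e^{\im\pi/n}:r>0\}$, and $\rd(n)$ contains the former but not the latter. For (c): $\pi/(n+1)<\pi/n$, so $(\pi/n,\pi]\subset(\pi/(n+1),\pi]$ and therefore $\rd(n)\subseteq\rd(n+1)$.

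The delicate point, which I expect to be the main obstacle, is the realizability step: passing from the two-point relation $c_1(1+t_1z)^n+c_2(1+t_2z)^n=0$ to an actual pair of star bodies. For $n\ge4$ the measure $c_1\delta_{t_1}+c_2\delta_{t_2}$ lies on the boundary of $C_{t_1,t_2}^I$, so Theorem~\ref{CharacterizationDualMixedVolumes1} does not apply to it as it stands. One must perturb it into a measure with enough support --- an absolutely continuous one, or one with at least $\lceil(n+1)/2\rceil$ interior atoms --- whose order-$n$ moment vector is interior to some $C_{a,b}^I$, while preserving $\int(1+tz)^n\,\dlat\mu=0$. I expect this to follow from a standard implicit-function/transversality argument: spreading an atom over a short interval of length $\delta$ changes the integral by $O(\delta^2)$, while varying a single atom position $t_1$ together with its weight $c_1$ produces a differential onto $\C$ that is surjective (since $z/(1+t_1z)$ is non-real), so the vanishing of the integral can be restored; the resulting measure, being absolutely continuous with density positive on an interval, has order-$n$ moment vector interior to a suitable $C_{a,b}^I$, and Theorem~\ref{CharacterizationDualMixedVolumes1} then supplies $K,L$ with $\f{K}{L}(z)=0$.
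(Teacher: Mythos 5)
Your route is genuinely different from the paper's, and stronger if it can be completed. The paper proves (a), (b), (c) separately: (a) via the operations of Lemma~\ref{lem:roots_change} together with explicit star-body constructions joining two given roots; (b) by showing $\C^+\setminus\rd(n)$ is closed, through a limit argument on normalized elementary symmetric functions combined with Theorem~\ref{CharacterizationDualMixedVolumes1}; (c) via Proposition~\ref{p:derivative_integral}(ii) and Lucas' theorem. You instead aim for the explicit identity $\rd(n)=\{z\neq 0:\arg z\in(\pi/n,\pi]\}$, from which (a)--(c) are immediate. This is consistent with Proposition~\ref{p:r(2)_r(3)} ($n=2$) and the remark after it ($n=3$, where the paper only shows one inclusion), and your integral representation $\f{K}{L}(z)=\int(1+tz)^n\,\dlat\mu(t)$ together with the monotonicity of $\psi(t)=\arg(1+tz)$ is exactly the right tool; the exclusion of $\arg z\le\pi/n$ is correct and clean.

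The gap you flag is real: for $n\ge 4$ the two-atom measure sits on $\bd C_{a,b}^{\{0,\dots,n\}}$, and your perturbation sketch (spread atoms, restore the zero by the implicit function theorem) is not yet a proof --- it does not actually control that the resulting measure has full support on a single interval, nor that the adjusted weight and position stay admissible. The gap closes much more directly by invoking the paper's Theorem~\ref{t:Riesz} on the planar curve $t\mapsto(1+tz)^n$ itself. Since $\psi$ maps $(0,\infty)$ continuously and strictly increasingly onto $(0,\varphi)$ and $n\varphi>\pi$, choose $0<a<b$ with $n\bigl(\psi(b)-\psi(a)\bigr)>\pi$. Then the arguments of $(1+tz)^n$, $t\in[a,b]$, fill a closed interval of length $>\pi$, so $\{(1+tz)^n:t\in[a,b]\}$ is contained in no closed half-plane through the origin and hence $0\in\inter\conv\{(1+tz)^n:t\in[a,b]\}$ in $\R^2$. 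Theorem~\ref{t:Riesz} then yields a probability measure $\mu$ on $[a,b]$ with continuous positive density and $\int_a^b(1+tz)^n\,\dlat\mu(t)=0$. Such a $\mu$ satisfies $\mu\bigl([c,d)\bigr)>0$ for every $[c,d)\subset[a,b]$, so Lemma~\ref{lemmaMeasuresFullSupport} applied to the moment curve gives $(1,m_1(\mu),\dots,m_n(\mu))\in\inter C_{a,b}^{\{0,\dots,n\}}$, and Theorem~\ref{CharacterizationDualMixedVolumes1} supplies $K,L\in\S^n_0$ with $\Wd_i(K,L)=m_i(\mu)$; then $\f{K}{L}(z)=\int_a^b(1+tz)^n\,\dlat\mu(t)=0$, so $z\in\rd(n)$. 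With this replacement your argument is complete, and it in fact pins down the exact shape of the cone $\rd(n)$, something the paper never makes explicit.
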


We observe that the dual Steiner polynomial shares properties (a) and (c)
with the relative Steiner polynomial (see \cite[Theorem~1.1 and
Theorem~1.3]{HHCS}). However, property (b) provides a first structural
difference between both polynomials, since the cone of roots of the
classical Steiner polynomial is shown to be closed (see
\cite[Theorem~1.2]{HHCS}).

The above theorem will be proved in Section \ref{s:roots}, along with
several additional properties of the roots. In Section
\ref{s:dual_quermass} we give the proof of
Theorems~\ref{CharacterizationDualMixedVolumes1} and
\ref{t:family_ineq_dual}, which are based on a relation with the moment
problem.

\section{Dual querma\ss integrals and the moment problem}\label{s:dual_quermass}

For any $i\in\R$, the $i$-th dual querma\ss integral is a monotonous and
homogeneous functional of degree $n-i$ in its first argument and of degree
$i$ in the second one (cf. \eqref{eq:dual_quermass}), i.e.: given
$K,K',L\in\S^n_0$ with $K\subseteq K'$ and $\lambda>0$, then
\[
\begin{split}
\Wd_i(K,L) & \leq\Wd_i(K',L)\quad\text{ and}\\
\Wd_i(\lambda K,L) & =\lambda^{n-i}\Wd_i(K,L),\quad
    \Wd_i(K,\lambda L)=\lambda^i\Wd_i(K,L),
\end{split}
\]
for any $i\in\R$. It is also well-known that the dual querma\ss integrals
of two star bodies $K,L$ satisfy the inequalities
\begin{equation}\label{e:special_dual_af}
\Wd_j(K,L)^{k-i}\leq\Wd_i(K,L)^{k-j}\Wd_k(K,L)^{j-i},\quad\; i<j<k,
\end{equation}
the counterpart to the classical Aleksandrov-Fenchel inequalities (see
e.g. \cite[(9.40)]{Sch}), but now $i,j,k\in\R$ is allowed. In
\eqref{e:special_dual_af} equality holds if and only if $K$ and $L$ are
dilates.

We start this section collecting some further easy properties that will be
needed later on.
\begin{lemma}\label{l:quermass_prop}
Let $K,L\in\S^n_0$ and let $i,j,k\in\R$, with $i<j<k$.
\begin{enumerate}\itemsep0pt
\item[i)] If $L\subseteq K$ then
\begin{equation}\label{e:Favard_dual}
\Wd_i(K,L)\geq\Wd_j(K,L).
\end{equation}
\item[ii)] $\Wd_l(K,L)=\mu^l\Wd_0(K,L)$ for $l=i,j,k$ and some
$\mu>0$ if and only if $L=\mu K$.
\end{enumerate}
\end{lemma}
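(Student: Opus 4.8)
The plan is to read both statements off the integral representation \eqref{eq:dual_quermass}, invoking the dual Aleksandrov--Fenchel inequality \eqref{e:special_dual_af} only through its equality case in ii).

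First, for i), I would rewrite, for an arbitrary real index $l$,
\[
\Wd_l(K,L)=\frac1n\int_{\s^{n-1}}\rho_K(u)^n\Bigl(\frac{\rho_L(u)}{\rho_K(u)}\Bigr)^l\,\dlat\sigma(u),
\]
which is legitimate since $\rho_K$ is positive and continuous on $\s^{n-1}$. The hypothesis $L\subseteq K$ means $0<\rho_L(u)/\rho_K(u)\le 1$ for every $u\in\s^{n-1}$, and for a fixed $t\in(0,1]$ the map $l\mapsto t^l$ is non-increasing; hence $i<j$ forces $(\rho_L/\rho_K)^i\ge(\rho_L/\rho_K)^j$ pointwise, and multiplying by $\rho_K^n\ge 0$ and integrating gives \eqref{e:Favard_dual}. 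This part is entirely routine.

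For ii), the implication ``$L=\mu K\ \Rightarrow\ \Wd_l(K,L)=\mu^l\Wd_0(K,L)$'' is immediate from \eqref{eq:dual_quermass}, since then $\rho_L=\mu\rho_K$ and $\Wd_0(K,L)=|K|$. For the converse I would assume $\Wd_l(K,L)=\mu^l\Wd_0(K,L)$ for $l=i,j,k$ and substitute these three values into \eqref{e:special_dual_af}; using the identities $i(k-j)+k(j-i)=j(k-i)$ and $(k-j)+(j-i)=k-i$ one checks that both sides equal $\mu^{j(k-i)}\Wd_0(K,L)^{k-i}$, so \eqref{e:special_dual_af} holds with equality (note $k-i>0$ and $\Wd_0(K,L)=|K|>0$). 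By the equality characterization recalled after \eqref{e:special_dual_af}, $K$ and $L$ must then be dilates, say $L=\mu'K$ with $\mu'>0$; applying the already-proved implication and comparing with the hypothesis at an index $l\in\{i,j,k\}$ with $l\neq 0$ (at least one of $i,j,k$ is nonzero) yields $(\mu')^l=\mu^l$, hence $\mu'=\mu$ and $L=\mu K$.

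The only non-mechanical point is the appeal to the equality condition in \eqref{e:special_dual_af}; everything else is a direct manipulation of the integral formula, so I do not expect a genuine obstacle here.
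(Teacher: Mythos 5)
Your proof is correct and takes essentially the same route as the paper: part i) is the pointwise monotonicity of $t\mapsto t^l$ on $(0,1]$ read off the integral representation (which the paper compresses to ``direct consequence of monotonicity''), and part ii) is exactly the paper's substitution into the equality case of the dual Aleksandrov--Fenchel inequality. The only difference is that you explicitly check that the dilation factor produced by the equality case must equal the given $\mu$, a small step the paper elides.
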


\begin{proof}
i) is a direct consequence of the monotonicity. In order to prove ii) we
observe that if $\Wd_l(K,L)=\mu^l\Wd_0(K,L)$ for $l=i,j,k$, then we get
\[
\Wd_i(K,L)^{k-j}\Wd_k(K,L)^{j-i}=\mu^{j(k-i)}\Wd_0(K,L)^{k-i}=\Wd_j(K,L)^{k-i}.
\]
Hence, we have equality in the dual Aleksandrov-Fenchel inequality
\eqref{e:special_dual_af}, which yields $L=\mu K$. The converse is
obvious.
\end{proof}

\subsection{Characterizing dual querma\ss integrals: proof of Theorem~\ref{CharacterizationDualMixedVolumes1}}

The main purpose of this section is to prove
Theorem~\ref{CharacterizationDualMixedVolumes1}. To this end we first show
a couple of lemmas, for which we need the following notation: given a
measure $\mu$ on an interval $[a,b]\subset(0,\infty)$ we write, for any
$i\in\R$,
\[
m_i(\mu)=\int_a^bt^i\,\dlat\mu(t).
\]
We observe that when $i\in\N\cup\{0\}$, the above numbers are the moments
of $\mu$ on the interval $[a,b]$ (see Subsection~\ref{ss:moment} for a
brief introduction to the moment problem).

\begin{lemma}\label{l:lemma1}
Let $[a,b]\subset(0,\infty)$. For $I=\{0,i_1,\dots,i_m\}\subset\R$, let
$(\omega_i)_{i\in I}$ be a sequence of $m+1$ positive numbers with
$\omega_0=|B_2^n|$. Let $\mu$ be a positive measure on $[a,b]$ such that
$\mu\bigl([c,d)\bigr)>0$ for every $[c,d)\subset[a,b]$ and
\[
m_i(\mu)=\omega_i \quad\text{ for every }\;i\in I.
\]
Then there exists $L\in\S^n_0$ satisfying $\omega_i=\Wd_i(B_2^n,L)$ for
all $i\in I$.
\end{lemma}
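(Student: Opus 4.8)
The plan is to reduce the lemma to the construction of a single radial function on the sphere, and to obtain that function as a composition of two monotone maps. First I would observe that it suffices to exhibit a \emph{continuous} function $\rho\colon\s^{n-1}\to[a,b]$ such that the image (push-forward) measure of $\tfrac1n\sigma$ under $\rho$ equals $\mu$, i.e. $\tfrac1n\sigma\bigl(\rho^{-1}(B)\bigr)=\mu(B)$ for every Borel set $B\subseteq[a,b]$. Indeed, since $a>0$, such a $\rho$ is positive and continuous, hence it is the radial function of the star body $L:=\{t u:u\in\s^{n-1},\ 0\le t\le\rho(u)\}$, and by \eqref{eq:dual_quermass} together with the change-of-variables formula for image measures,
\[
\Wd_i(B_2^n,L)=\frac1n\int_{\s^{n-1}}\rho(u)^i\,\dlat\sigma(u)=\int_a^b t^i\,\dlat\mu(t)=m_i(\mu)=\omega_i
\]
for every $i\in I$, all integrals being finite because $[a,b]\subset(0,\infty)$ is compact. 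Note also that the total masses are automatically compatible: by spherical coordinates $\tfrac1n\sigma(\s^{n-1})=|B_2^n|=\omega_0=m_0(\mu)=\mu([a,b])$.

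Next I would build $\rho=G\circ\phi$, where $\phi\colon\s^{n-1}\to[0,\omega_0]$ ``flattens'' $\tfrac1n\sigma$ to Lebesgue measure and $G\colon[0,\omega_0]\to[a,b]$ is the quantile of $\mu$. Writing $u=(u_1,\dots,u_n)$ and $h(c)=\tfrac1n\sigma(\{v\in\s^{n-1}:v_n\le c\})$, the map $h\colon[-1,1]\to[0,\omega_0]$ is a continuous, strictly increasing bijection (for $n\ge 2$ the subsphere $\{v_n=c\}$ is $\sigma$-null), and $\phi(u):=h(u_n)$ satisfies $\phi^{-1}\bigl([0,s]\bigr)=\{u:u_n\le h^{-1}(s)\}$, so the image measure of $\tfrac1n\sigma$ under $\phi$ is exactly Lebesgue measure on $[0,\omega_0]$. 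For $G$, set $F(t)=\mu([a,t])$ and $G(s)=\inf\{t\in[a,b]:F(t)\ge s\}$; it is standard that $G$ is non-decreasing, that $G(0)=a$ and $G(\omega_0)=b$, and that the image measure of Lebesgue measure on $[0,\omega_0]$ under $G$ is $\mu$. Composing, $\rho=G\circ\phi$ maps $\s^{n-1}$ into $[a,b]\subset(0,\infty)$ and carries $\tfrac1n\sigma$ to $\mu$, which is precisely what the reduction requires.

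The only genuine obstacle is the continuity of $\rho$, that is, the continuity of $G$, and this is exactly where the hypothesis that $\mu$ charges every subinterval of $[a,b]$ is used. The quantile $G$ is always left-continuous, and it can fail to be right-continuous at a level $s$ only if $F$ is constant on some non-degenerate interval $[c,d]\subseteq[a,b]$; but then $\mu\bigl[\tfrac{c+d}{2},d\bigr)\le\mu((c,d])=0$, contradicting the assumption. Hence $F$ has no flat parts, $G$ is continuous, and so is $\rho=G\circ\phi$. It then remains only to verify the routine points: the elementary image-measure identities used above, and the fact that a positive continuous $\rho$ does define an $n$-dimensional star body (as noted right after the definition of $\S^n_0$).
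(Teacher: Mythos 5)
Your proof is correct and follows essentially the same route as the paper: define $\rho_L$ as a (generalized) quantile function of $\mu$ composed with a one-coordinate map that transports the spherical measure to a uniform reference measure, use the hypothesis that $\mu$ charges every subinterval to guarantee continuity of the quantile, and then read off the moments. The only differences are cosmetic: the paper works with the survival function $t\mapsto\mu([t,b])/\mu([a,b])$ and a $\sup$-based inverse (so $\rho_L$ decreases in $|u_1|$) and verifies the integral identity via the layer-cake formula, whereas you use the CDF $t\mapsto\mu([a,t])$ with an $\inf$-based quantile on $[0,\omega_0]$ and invoke the push-forward change of variables directly.
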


\begin{proof}
Let $F:[a,b]\longrightarrow[0,1]$ be the function defined by
\[
F(t)=\frac{\mu\bigl([t,b]\bigr)}{\mu\bigl([a,b]\bigr)}.
\]
Our assumption ensures that $F$ is a strictly decreasing function and
continuous from the left, and satisfies $F(a)=1$ and
$F(b)=\mu\bigl(\{b\}\bigr)/\mu\bigl([a,b]\bigr)$. Let
$G:[0,1]\longrightarrow[a,b]$ be the function
\[
G(s)=\sup\bigl\{t\in[a,b]:F(t)\geq s\bigr\},
\]
which coincides with $F^{-1}$ when $F$ is bijective. Since $F$ is strictly
decreasing, it is easy to see that $G$ is decreasing and continuous, and
so, the function $\rho_L:\s^{n-1}\longrightarrow[a,b]$ given by
\[
\rho_L(u)=G\left(\frac{\sigma\Bigl(\bigl\{v\in\s^{n-1}:|v_1|\geq|u_1|\bigr\}\Bigr)}{\sigma(\s^{n-1})}\right)
\]
is continuous on $\s^{n-1}$ and hence defines a star body $L$. Moreover,
it is clear that $\rho_L(v)\geq\rho_L(u)$ if and only if $|v_1|\geq
|u_1|$. Therefore, if $t\in[a,b]$ then
\[
\frac{\sigma\Bigl(\bigl\{v\in\s^{n-1}:\rho_L(v)\geq
t\bigr\}\Bigr)}{\sigma(\s^{n-1})}=F(t)=\frac{\mu\bigl([t,b]\bigr)}{\mu\bigl([a,b]\bigr)},
\]
whereas for $t\notin[a,b]$ we trivially have
\[
\frac{\sigma\Bigl(\bigl\{v\in\s^{n-1}:\rho_L(v)\geq
t\bigr\}\Bigr)}{\sigma(\s^{n-1})}=\frac{\mu\Bigl(\bigl\{s\in[a,b]:s\geq
t\bigr\}\Bigr)}{\mu\bigl([a,b]\bigr)}.
\]
Finally, since $\mu\bigl([a,b]\bigr)=m_0(\mu)=\omega_0=|B_2^n|$ and
$\sigma(\s^{n-1})=n|B^n_2|$, we get, for every $i\in I$,
\[
\begin{split}
\Wd_i(B_2^n,L) &
    =\frac{1}{n}\int_{\s^{n-1}}\rho_L^i(u)\,\dlat\sigma(u)\\
 & =\frac{1}{n}\int_0^\infty it^{i-1}\sigma\Bigl(\bigl\{v\in\s^{n-1}:\rho_L(v)\geq t\bigr\}\Bigr)\,\dlat t\\
 & =\frac{|B_2^n|}{\mu\bigl([a,b]\bigr)}\int_0^\infty it^{i-1}\mu\Bigl(\bigl\{s\in[a,b]:s\geq t\bigr\}\Bigr)\,\dlat t\\
 & =\int_a^bs^i\,\dlat\mu(s)=m_i(\mu)=\omega_i.\qedhere
\end{split}
\]
\end{proof}

A refinement of Riesz's Theorem will be also a key tool in our proof (see
e.g. \cite[Theorem~3.5 and P.~3.9 in p.~17]{KrNu}). It provides the
connection between our characterization of dual querma\ss integrals and
the moment problem, for which we refer to Subsection \ref{ss:moment}.

\begin{theorem}[Riesz]\label{t:Riesz}
Let $\alpha:[a,b]\longrightarrow\R^n$,
$\alpha(t)=\bigl(\alpha_1(t),\dots,\alpha_n(t)\bigr)$, be a curve in
$\R^n$ and let $x=(x_1,\dots,x_n)\in\R^n$. There exists a probability
measure $\mu$ on $[a,b]$ such that
\[
x_i=\int_a^b\alpha_i(t)\,\dlat\mu(t),\quad\text{ for every } i=1,\dots n,
\]
if and only if $x\in\conv\bigl\{\alpha(t):t\in[a,b]\bigr\}$.

Moreover, $x\in\inter\conv\bigl\{\alpha(t):t\in[a,b]\bigr\}$ if and only
if there exits a continuous function $\phi:[a,b]\longrightarrow(0,\infty)$
such that $\dlat\mu(t)=\phi(t)\,\dlat t$.
\end{theorem}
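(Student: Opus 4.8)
The plan is to prove the two equivalences in turn, the first being a short separation argument and the second a convexity--geometry argument built on top of it. For the first equivalence, suppose $x=\int_a^b\alpha(t)\,\dlat\mu(t)$ for a probability measure $\mu$ on $[a,b]$; then for every $c\in\R^n$ one has $\langle c,x\rangle=\int_a^b\langle c,\alpha(t)\rangle\,\dlat\mu(t)\le\max_{t\in[a,b]}\langle c,\alpha(t)\rangle$, so $x$ lies in every closed half-space containing the compact set $\alpha([a,b])$; since $\conv\{\alpha(t):t\in[a,b]\}$ is compact (the convex hull of a compact subset of $\R^n$, via Carath\'eodory), it equals the intersection of all such half-spaces, whence $x\in\conv\{\alpha(t):t\in[a,b]\}$. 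Conversely, if $x\in\conv\{\alpha(t):t\in[a,b]\}$, Carath\'eodory's theorem yields finitely many $t_1,\dots,t_N\in[a,b]$ and weights $\lambda_j\ge0$ with $\sum_j\lambda_j=1$ and $x=\sum_j\lambda_j\alpha(t_j)$, so the finitely supported probability measure $\mu=\sum_j\lambda_j\delta_{t_j}$ represents $x$.

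For the second equivalence, put $D=\conv\{\alpha(t):t\in[a,b]\}$ and let $U\subseteq\R^n$ be the set of $x$ admitting a representation with $\dlat\mu(t)=\phi(t)\,\dlat t$ for some continuous $\phi:[a,b]\to(0,\infty)$ (necessarily with $\int_a^b\phi=1$). I would show $U=\inter D$ — where $\inter$ is read relative to $\aff\{\alpha(t):t\in[a,b]\}$, which in the applications is no restriction since the curve $t\mapsto(1,t^{i_1},\dots,t^{i_m})$ is non-degenerate — through four steps: (i) $U$ is convex, since a convex combination of two such densities is one; (ii) $U\subseteq D$, by the first equivalence; (iii) $\overline U=D$, because any $p\in D$ equals $\int_a^b\alpha\,\dlat\mu$ for some probability measure $\mu$, and $\mu$ is a weak-$*$ limit of probability measures $\mu_k$ with continuous strictly positive densities (mollify and add a vanishing multiple of the uniform measure), whence $\int_a^b\alpha\,\dlat\mu_k\to p$ by continuity of $\alpha$; (iv) $U$ is (relatively) open: for $x=\int_a^b\alpha\phi\,\dlat t\in U$ and any $\psi\in C([a,b])$ with $\int_a^b\psi=0$, the density $\phi+s\psi$ is still positive for $|s|$ small and represents $x+s\int_a^b\alpha\psi\,\dlat t$, while $\{\int_a^b\alpha\psi\,\dlat t:\psi\in C([a,b]),\,\int_a^b\psi=0\}$ is the full subspace parallel to $\aff\{\alpha(t):t\in[a,b]\}$ — a covector annihilating it would force $t\mapsto\langle c,\alpha(t)\rangle$ to be $L^2$-orthogonal to $\{1\}^\perp$, hence constant, contradicting non-degeneracy. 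Combining (i)--(iv): $U$ is a nonempty open convex set with $\overline U=D$, and for convex sets $\inter\overline U=\inter U=U$, so $U=\inter D$, as claimed.

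The step I expect to be the real obstacle is (iv), i.e.\ showing that from any $x\in U$ one may perturb the density so as to move $x$ in every direction of the ambient flat; this is exactly where the non-degeneracy of the curve (linear independence of $1,\alpha_1,\dots,\alpha_n$ as functions on $[a,b]$) is used, and it is also where the relative-versus-absolute interior issue must be handled with care. By contrast, step (iii) is routine measure theory and the first equivalence is classical. An alternative route, closer to the moment-theoretic treatment of \cite{KrNu}, would work throughout with the lift $\widehat\alpha(t)=(1,\alpha_1(t),\dots,\alpha_n(t))$ and the moment cone $\pos\{\widehat\alpha(t):t\in[a,b]\}$, in which the relevant set is genuinely full-dimensional; but the convexity-geometry argument above seems the most economical.
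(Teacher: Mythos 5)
The paper itself does not prove this theorem; it is quoted from Krein--Nudelman (\cite[Theorem~3.5 and P.~3.9]{KrNu}), so there is no internal proof to compare against. Your argument is a correct, essentially self-contained alternative to the moment-theoretic treatment in that reference, with a sound overall plan: a separation argument for the first equivalence, and then showing that the set $U$ of points with a continuous positive-density representation is a nonempty, relatively open, convex set with $\overline U=D$, whence $U$ equals the relative interior of $D$ by the standard identity $\inter\overline U=\inter U$ for convex sets. Two places, however, deserve tightening. In step (iii), convolving a probability measure on $[a,b]$ with a mollifier pushes mass outside the interval, so ``mollify and add a vanishing multiple of the uniform measure'' does not produce probability measures \emph{on} $[a,b]$ as written; one remedy is to compress the measure slightly toward the interior before mollifying, and a cleaner one, given you have already proved the first equivalence, is to write $p=\sum_j\lambda_j\alpha(t_j)$ by Carath\'eodory, replace each $\delta_{t_j}$ by a narrow normalized continuous bump, and then mix in a vanishing multiple of the uniform density to force strict positivity. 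In step (iv), exhibiting a two-sided segment through $x$ inside $U$ in every direction of the flat is not, by itself, openness; it becomes openness only after invoking the convexity from step (i) (so that $U$ contains the convex hull of the $2d$ endpoints, a relative neighborhood of $x$), and this should be said explicitly since (iv) is the step you yourself identify as the crux. You are also right to flag the relative-versus-absolute interior issue: the ``moreover'' clause, taken literally, fails for curves contained in a hyperplane (a positive continuous density always exists while $\inter\conv\{\alpha(t):t\in[a,b]\}=\emptyset$), so non-degeneracy of the curve---which holds in all the paper's applications, where $\alpha(t)=(t^{i_1},\dots,t^{i_m})$ with distinct exponents---is an implicit hypothesis. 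The lifted formulation with $\widehat\alpha(t)=(1,\alpha(t))$ and the cone $\pos\{\widehat\alpha(t):t\in[a,b]\}$ that you mention at the end is indeed closer in spirit to the Krein--Nudelman source and to the cone $C_{a,b}^I$ actually used in the paper; it also sidesteps the probability-normalization bookkeeping, at the cost of working one dimension up.
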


The following lemma shows that the above property is also equivalent to
the fact that the measure $\mu$ can be assumed to be supported on the
whole interval $[a,b]$.

\begin{lemma}\label{lemmaMeasuresFullSupport}
Let $\alpha:[a,b]\longrightarrow\R^n$ be a continuous curve in $\R^n$ not
contained in a hyperplane and let $x=(x_1,\dots,x_n)\in\R^n$. There exists
a probability measure $\mu$ on $[a,b]$ such that $\mu\bigl([c,d)\bigr)>0$
for every $[c,d)\subset[a,b]$ and
\begin{equation}\label{e:eq_lemmaMeasuresFullSupport}
x_i=\int_a^b\alpha_i(t)\,\dlat\mu(t),\quad\text{ for every } i=1,\dots n,
\end{equation}
if and only if $x\in\inter\conv\bigl\{\alpha(t):t\in[a,b]\bigr\}$.
\end{lemma}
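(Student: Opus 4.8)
The plan is to derive this equivalence from Riesz's Theorem (Theorem~\ref{t:Riesz}) by a perturbation argument that trades ``interior'' for ``full support''. One implication is the substantive one: assuming $x\in\inter\conv\{\alpha(t):t\in[a,b]\}$, I must produce a probability measure $\mu$ with $\mu([c,d))>0$ for every $[c,d)\subset[a,b]$ and satisfying \eqref{e:eq_lemmaMeasuresFullSupport}. The converse should be quick: if such a full-support $\mu$ exists, then $x\in\conv\{\alpha(t):t\in[a,b]\}$ by the first part of Theorem~\ref{t:Riesz}; to upgrade to the interior I would argue that a full-support measure cannot represent a boundary point, since at a boundary point there is a supporting hyperplane $\langle u,\cdot\rangle=c$ with $\langle u,\alpha(t)\rangle\geq c$ on $[a,b]$ (after translating, with $x$ on the hyperplane), forcing $\langle u,\alpha(t)\rangle=c$ $\mu$-a.e.; because $\alpha$ is continuous and not contained in a hyperplane, the set where equality holds is a proper closed subset omitting some subinterval, contradicting full support.

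For the forward direction I would proceed as follows. First, by the ``moreover'' part of Theorem~\ref{t:Riesz}, $x\in\inter\conv\{\alpha(t):t\in[a,b]\}$ already yields a representing probability measure of the form $\dlat\mu_0(t)=\phi_0(t)\,\dlat t$ with $\phi_0$ continuous and strictly positive on $[a,b]$; such a measure automatically has full support, so in fact \emph{this same $\mu_0$ works} and there is almost nothing left to do. I would double-check that nothing weaker is being asked: the statement only wants \emph{some} full-support probability measure, and the absolutely continuous one with positive continuous density furnished by Riesz is exactly that. So the core of the argument is simply citing the interior characterization in Theorem~\ref{t:Riesz} and observing $\mu([c,d)) = \int_c^d \phi_0(t)\,\dlat t > 0$ since $\phi_0 > 0$ and $c<d$.

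The only point requiring a little care — and the step I expect to be the main obstacle — is the converse: ruling out that a full-support measure represents a boundary point of the convex hull. Here I would use that $C:=\conv\{\alpha(t):t\in[a,b]\}$ is a compact convex set whose affine hull is all of $\R^n$ (because $\alpha$ is not contained in any hyperplane, its image affinely spans $\R^n$). If $x\in\bd C$, pick a unit $u$ and scalar $c$ with $\langle u,y\rangle\geq c$ for all $y\in C$ and $\langle u,x\rangle=c$; then $0=\langle u,x\rangle-c=\int_a^b\bigl(\langle u,\alpha(t)\rangle-c\bigr)\,\dlat\mu(t)$ with a nonnegative continuous integrand, so $\langle u,\alpha(t)\rangle=c$ on $\mathrm{supp}\,\mu=[a,b]$, i.e. the whole curve lies in the hyperplane $\{\langle u,\cdot\rangle=c\}$ — contradiction. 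Conversely if $x\notin C$ then no representing measure exists at all. Hence a full-support representing measure forces $x\in C\setminus\bd C=\inter C$, completing the proof.
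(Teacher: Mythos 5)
Your proposal is correct and follows essentially the same approach as the paper: both directions rely on Theorem~\ref{t:Riesz} exactly as you describe, and the boundary-point contradiction via a supporting hyperplane together with continuity of $\alpha$ and full support of $\mu$ is precisely the paper's argument (the paper phrases it by exhibiting a subinterval $[c,d)$ where strict inequality holds, while you pass through $\mathrm{supp}\,\mu=[a,b]$, but these are the same idea). No gaps.
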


\begin{proof}
First we suppose that $x\in\inter\conv\bigl\{\alpha(t):t\in[a,b]\bigr\}$.
Then, by Theorem \ref{t:Riesz}, there exists a probability measure $\mu$
with a positive density $\phi$ with respect to the Lebesgue measure
satisfying \eqref{e:eq_lemmaMeasuresFullSupport}, and thus
$\mu\bigl([c,d)\bigr)>0$ for all $[c,d)\subset[a,b]$.

Conversely, if we suppose the existence of a measure $\mu$ satisfying our
hypotheses, Theorem~\ref{t:Riesz} ensures that
$x\in\conv\bigl\{\alpha(t):t\in[a,b]\bigr\}$. So, let us assume that
$x\in\bd\conv\bigl\{\alpha(t):t\in[a,b]\bigr\}$. Then there exists a
supporting hyperplane to $\conv\bigl\{\alpha(t):t\in[a,b]\bigr\}$ at $x$
with outer normal vector $u\in\s^{n-1}$ such that $\langle y,u\rangle\leq
\langle x,u\rangle$ for every
$y\in\conv\bigl\{\alpha(t):t\in[a,b]\bigr\}$. Furthermore, since
$\alpha\bigl([a,b]\bigr)$ is not contained in a hyperplane, there exists
$[c,d)\subset[a,b]$ such that $\langle\alpha(t),u\rangle<\langle
x,u\rangle$ for every $t\in[c,d)$, and thus
\[
\langle x,u\rangle
=\int_a^b\bigl\langle\alpha(t),u\bigr\rangle\,\dlat\mu(t)<\langle
x,u\rangle,
\]
a contradiction. Therefore,
$x\in\inter\conv\bigl\{\alpha(t):t\in[a,b]\bigr\}$.
\end{proof}

Now we are ready to prove Theorem \ref{CharacterizationDualMixedVolumes1},
for which we will apply the above results to the curve
$\alpha(t)=(t^{i_1},t^{i_2},\dots,t^{i_m})$. This is called the {\it
moment curve} when $i_k=k$ for $k=1,\dots,m$.

\begin{proof}[Proof of Theorem \ref{CharacterizationDualMixedVolumes1}]
We start assuming the existence of star bodies $K,L\in\S^n_0$ such that
$\omega_i=\Wd_i(K,L)$ for all $i\in I$.

First we observe that
\begin{equation}\label{e:dualW_i_int2}
\Wd_i(K,L)=\frac{1}{n}\int_{\s^{n-1}}\left(\frac{\rho_L(u)}{\rho_K(u)}\right)^i\rho_K^{n}(u)\,\dlat\sigma(u).
\end{equation}
We denote by $f:\s^{n-1}\longrightarrow(0,\infty)$ the continuous function
given by
\[
f(u)=\frac{\rho_L(u)}{\rho_K(u)},
\]
and let $a=\min_{u\in\s^{n-1}}f(u)$ and $b=\max_{u\in\s^{n-1}}f(u)$.

Let $\nu$ be the measure on the sphere given by
$\dlat\nu=(1/n)\rho_K^n(u)\,\dlat\sigma(u)$ and let $\mu$ be the
push-forward measure of $\nu$ by $f$. Then $\mu$ is supported on $[a,b]$
and is defined by
\[
\mu(A)=\nu\bigl(f^{-1}(A)\bigr)=\frac{1}{n}\int_{f^{-1}(A)}\rho_K^n(u)\,\dlat\sigma(u)
\]
for any Borel subset $A\subseteq [a,b]$. Consequently, if $a<b$ we have
\[
\begin{split}
\int_a^bt^i\,\dlat\mu(t) & =\int_{\s^{n-1}}f(u)^i\,\dlat\nu(u)
    =\frac{1}{n}\int_{\s^{n-1}}\left(\frac{\rho_L(u)}{\rho_K(u)}\right)^i\rho_K^{n}(u)\,\dlat\sigma(u)\\
 & =\Wd_i(K,L)=\omega_i
\end{split}
\]
for every $i\in I$, and $\dlat\mu/\Wd_0(K,L)$ is a probability measure
with support $[a,b]$. Then, the second part in Theorem \ref{t:Riesz}
ensures that
\[
\left(\frac{\omega_{i_1}}{\omega_0},\frac{\omega_{i_2}}{\omega_0},\dots,\frac{\omega_{i_m}}{\omega_0}\right)
\in\inter\conv\bigl\{(t^{i_1},t^{i_2},\dots,t^{i_m}):t\in[a,b]\bigr\}
\]
and hence
\[
(\omega_0,\omega_{i_1},\dots,\omega_{i_m})\in\inter C_{a,b}^I.
\]
Now, if $a=b$ then $f(u)=a$ for every $u\in\s^{n-1}$, which implies that
$L=aK$, and hence $\omega_i=\Wd_i(K,L)=a^i|K|=a^i\omega_0$ for all $i\in
I$.

For the converse, we first assume that there exist $0<a<b$ such that
\[
(\omega_0,\omega_{i_1},\dots,\omega_{i_m})\in\inter C_{a,b}^I.
\]
Then
\[
\left(\frac{\omega_{i_1}}{\omega_0},\frac{\omega_{i_2}}{\omega_0},\dots,\frac{\omega_{i_m}}{\omega_0}\right)
\in\inter\conv\bigl\{(t^{i_1},t^{i_2},\dots,t^{i_m}):t\in[a,b]\bigr\},
\]
and Lemma \ref{lemmaMeasuresFullSupport} ensures the existence of a
probability measure $\mu$ on $[a,b]$ such that $\mu\bigl([c,d)\bigr)>0$
for every $[c,d)\subset[a,b]$ and
\[
\frac{\omega_i}{\omega_0}=\int_a^bt^i\,\dlat\mu(t),\quad i\in I.
\]
If we write
\[
\omega_i'=\frac{|B_2^n|\omega_i}{\omega_0}=\int_a^bt^i|B_2^n|\,\dlat\mu(t)=m_i\bigl(|B_2^n|\mu\bigr),
\]
we can apply Lemma \ref{l:lemma1} to the set
$\left\{\omega_0',\omega_{i_1}'\dots,\omega_{i_m}'\right\}$ and obtain the
existence of a star body $L'\in\S^n_0$ such that
\[
\frac{|B_2^n|\omega_i}{\omega_0}=\omega_i'=\Wd_i(B_2^n,L')
\]
for $i\in I$. Then, just taking
\[
K=\left(\frac{\omega_0}{|B_2^n|}\right)^{1/n}B_2^n\quad\text{ and } \quad
L=\left(\frac{\omega_0}{|B_2^n|}\right)^{1/n}L',
\]
we get
\[
\omega_i=\Wd_i(K,L)\quad\text{ for all }\; i\in I.
\]
Finally, if there exists $\lambda>0$ such that
$\omega_i=\lambda^i\omega_0$ for all $i\in I$, any star body $K\in\S^n_0$
with $|K|=\omega_0$ and $L=\lambda K$ yield $\omega_i=\Wd_i(K,L)$.
\end{proof}

From now on, when $I=\{0,\dots,m\}\subset\N$, we will write $C_{a,b}^m$
for the cone $C_{a,b}^I$ in order to stress its dimension. At this point
we would like to notice the following fact, which has be used in the above
proof: a point
\[
(x_0,x_1,\dots,x_m)\in\inter C_{a,b}^m
\]
if and only if
\[
\left(\frac{x_1}{x_0},\dots,\frac{x_m}{x_0}\right)\in\inter\conv\bigl\{(t^1,t^2,\dots,t^m):t\in[a,b]\bigr\}.
\]
The set $\conv\bigl\{(t^1,t^2,\dots,t^m):t\in[a,b]\bigr\}$ is called the
{\it cyclic body} associated to $[a,b]$, and it is just the union of all
cyclic polytopes in $[a,b]$. So, in order to determine if a point lies or
not in its interior, it is convenient to know its facial structure and
supporting hyperplanes. This problem has been studied  and solved in
\cite{Pu}.

In the particular case when $I=\{0,1,\dots,m\}$, $m\in\N$, another
characterization of those sequences of positive numbers which are dual
querma\ss integrals of two star bodies can be obtained from
Theorem~\ref{CharacterizationDualMixedVolumes1} and the following fact
(see \cite[Theorem 1.1 in c.~3]{KrNu}):
\begin{equation}\label{t:polynom>0}
\mbox{\begin{minipage}{0.85\textwidth} Let $0<a<b$ and $m\in\N$. Then
$(x_0,\dots,x_m)\in\inter C_{a,b}^m$ if and only if, for every polynomial
$\sum_{i=0}^mc_it^i$ which is positive on $[a,b]$,
\[
\sum_{i=0}^mc_ix_i>0.
\]
\end{minipage}
}
\end{equation}

\begin{theorem}\label{CharacterizationDualMixedVolumes2}
Let $m,n\in\N$, $n\geq 2$, and let $(\omega_i)_{i=0}^m$ be a sequence of
$m+1$ positive numbers. Then there exist star bodies $K,L\in\S^n_0$ such
that
\[
\Wd_i(K,L)=\omega_i\quad\text{ for all }\;i=0,\dots m
\]
if and only if, either there exist $0<a<b$ such that the Hankel matrices
$(a_{j,k})_{j,k=0}^r$ and $(b_{j,k})_{j,k=0}^r$ given by
\[
\begin{split}
a_{j,k} & =\left\{\begin{array}{ll}
        \omega_{j+k} & \;\text{ if } m=2r,\\[1mm]
        \omega_{j+k+1}-a\omega_{j+k} & \;\text{ if } m=2r+1,
        \end{array}\right.\\[2mm]
b_{j,k} & =\left\{\begin{array}{ll}
        (a+b)\omega_{j+k+1}-ab\omega_{j+k}-\omega_{j+k+2} & \;\text{ if } m=2r,\\[1mm]
        b\omega_{j+k}-\omega_{j+k+1} & \;\text{ if } m=2r+1,
        \end{array}\right.
\end{split}
\]
are positive definite, or $\omega_i=\lambda^i\omega_0$ for some
$\lambda>0$ and every $i=1\dots, m$; in this case $L=\lambda K$.
\end{theorem}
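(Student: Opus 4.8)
The plan is to deduce Theorem~\ref{CharacterizationDualMixedVolumes2} from Theorem~\ref{CharacterizationDualMixedVolumes1} by translating the membership $(\omega_0,\dots,\omega_m)\in\inter C_{a,b}^m$ into the language of the (truncated Hausdorff) moment problem. First I would apply Theorem~\ref{CharacterizationDualMixedVolumes1} with $I=\{0,1,\dots,m\}$: it reduces the existence of $K,L\in\S^n_0$ with $\Wd_i(K,L)=\omega_i$ for $i=0,\dots,m$ to the dichotomy ``$\omega_i=\lambda^i\omega_0$ for some $\lambda>0$ and all $i$'' (which already occurs verbatim in the conclusion, with $L=\lambda K$) or ``$(\omega_0,\dots,\omega_m)\in\inter C_{a,b}^m$ for some $0<a<b$''. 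Thus everything comes down to the purely real-variable equivalence, for fixed $0<a<b$,
\[
(\omega_0,\dots,\omega_m)\in\inter C_{a,b}^m\iff\text{the Hankel matrices }(a_{j,k})\text{ and }(b_{j,k})\text{ are positive definite.}
\]

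Next I would fix notation by letting $L_\omega$ denote the linear functional $\sum_{i=0}^m c_it^i\mapsto\sum_{i=0}^m c_i\omega_i$ on polynomials of degree at most $m$, and I would record the elementary identities obtained by expanding $(t-a)t^{j+k}$, $(b-t)t^{j+k}$ and $(t-a)(b-t)t^{j+k}$: for $q(t)=\sum_j x_jt^j$ the quadratic forms $\sum_{j,k}a_{j,k}x_jx_k$ and $\sum_{j,k}b_{j,k}x_jx_k$ are precisely $L_\omega(q^2)$ and $L_\omega\bigl((t-a)(b-t)q^2\bigr)$ when $m=2r$, and $L_\omega\bigl((t-a)q^2\bigr)$ and $L_\omega\bigl((b-t)q^2\bigr)$ when $m=2r+1$. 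So ``$(a_{j,k})$ and $(b_{j,k})$ positive definite'' just says that $L_\omega$ is strictly positive on every nonzero weighted square appearing in the Markov--Lukács description of the polynomials that are positive on $[a,b]$.

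For the forward implication I would use that, by the observation following the proof of Theorem~\ref{CharacterizationDualMixedVolumes1} and by the second part of Theorem~\ref{t:Riesz}, a point of $\inter C_{a,b}^m$ is represented by a measure $\mu$ on $[a,b]$ with a strictly positive continuous density and $\int_a^b t^i\,\dlat\mu(t)=\omega_i$ for $i=0,\dots,m$; then $L_\omega(p)=\int_a^b p(t)\,\dlat\mu(t)$ for every polynomial $p$ of degree $\le m$, and since $q^2$, $(t-a)q^2$, $(b-t)q^2$ and $(t-a)(b-t)q^2$ are nonnegative on $[a,b]$, vanish only at finitely many points when $q\not\equiv 0$, and $\mu$ charges every subinterval, all of these integrals are strictly positive --- which gives the positive definiteness. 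For the converse I would take an arbitrary $p=\sum_{i=0}^m c_it^i$ strictly positive on $[a,b]$, invoke the Markov--Lukács representation $p=A+(t-a)(b-t)B$ when $m=2r$ (with $A,B$ sums of squares of polynomials of degrees $\le r$ and $\le r-1$) and $p=(t-a)A+(b-t)B$ when $m=2r+1$ (with $A,B$ sums of squares of polynomials of degree $\le r$), apply $L_\omega$, and use the identities above to express $L_\omega(p)$ as a sum of the quadratic forms $\sum_{j,k}a_{j,k}x_jx_k$ and $\sum_{j,k}b_{j,k}x_jx_k$; by positive definiteness this is $\ge 0$, and it vanishes only if every square in the representation is identically zero, i.e.\ $p\equiv 0$, which is absurd. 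Hence $L_\omega(p)>0$ for every $p$ positive on $[a,b]$, and \eqref{t:polynom>0} gives $(\omega_0,\dots,\omega_m)\in\inter C_{a,b}^m$, completing the equivalence.

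I expect the main obstacle to lie not in the overall architecture --- essentially ``Theorem~\ref{CharacterizationDualMixedVolumes1} plus the moment problem'' --- but in the bookkeeping: one must match the degree bounds in the Markov--Lukács theorem with the exact sizes of the two Hankel matrices (which is what forces the even/odd split in the statement), and one must notice that the squares $q^2$ and their weighted versions are only \emph{nonnegative}, not strictly positive, on $[a,b]$, so that in the forward direction the strict positivity of $L_\omega$ on them genuinely needs the ``fat'' representing measure furnished by Theorem~\ref{t:Riesz} rather than merely \eqref{t:polynom>0}. A fully self-contained write-up would either reproduce the relevant Markov--Lukács statements or quote the corresponding Hankel-matrix criterion for $\inter C_{a,b}^m$ directly from \cite{KrNu}.
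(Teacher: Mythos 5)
Your proposal is correct and follows essentially the same route as the paper: reduce via Theorem~\ref{CharacterizationDualMixedVolumes1} to the equivalence between $(\omega_0,\dots,\omega_m)\in\inter C_{a,b}^m$ and positive definiteness of the two Hankel forms, use Markov--Luk\'acs to decompose nonnegative polynomials on $[a,b]$ for the converse, and invoke \eqref{t:polynom>0}.

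The one genuine point of divergence is the forward direction. The paper applies \eqref{t:polynom>0} directly to the test polynomials $q^2$, $(t-a)(b-t)q^2$, etc., treating them as ``positive on $[a,b]$''; you instead go back to the representing measure with strictly positive continuous density supplied by Theorem~\ref{t:Riesz} (equivalently, Lemma~\ref{lemmaMeasuresFullSupport}) and observe that a nonnegative nonzero polynomial integrates to a strictly positive number against such a measure. Your variant is the more scrupulous one: the test polynomials are only nonnegative on $[a,b]$ and may vanish there, so a literal reading of ``positive on $[a,b]$'' in \eqref{t:polynom>0} would only give the quadratic forms $\geq 0$ by a limiting argument; the paper's step implicitly relies on the Krein--Nudelman version of the criterion that holds for nonnegative, not identically zero, polynomials. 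Your measure-theoretic argument sidesteps this delicacy entirely. You also correctly flag the one real piece of bookkeeping --- matching the Markov--Luk\'acs degree bounds to the Hankel matrix sizes in the even and odd cases --- which is the only place where care is genuinely needed (and where, incidentally, the size of the second Hankel block in the even case $m=2r$ must be $r\times r$, not $(r+1)\times(r+1)$, since otherwise the entries would involve $\omega_{2r+1}$ and $\omega_{2r+2}$, which lie outside the given data; so the bookkeeping you announce as the main hazard is indeed the point that requires the most attention in a fully written-out version).
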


\begin{proof}
By Theorem \ref{CharacterizationDualMixedVolumes1}, $\omega_i$, $i=0,\dots
m$, are the dual querma\ss integrals of two star bodies if and only if,
either there exist $0<a<b$ such that
\begin{equation}\label{e:cond_cone}
(\omega_0,\omega_1,\dots,\omega_m)\in\inter C_{a,b}^m,
\end{equation}
or $\omega_i=\lambda^i\omega_0$ for some $\lambda>0$, $i=1,\dots,m$. So,
we have to prove that \eqref{e:cond_cone} is equivalent to the fact that
the Hankel matrices $(a_{j,k})_{j,k=0}^r$, $(b_{j,k})_{j,k=0}^r$ are
positive definite.

First we assume \eqref{e:cond_cone} and consider the even case $m=2r$. On
one hand, for any $c_1,\dots,c_r\in\R$, the polynomial
\[
\left(\sum_{k=0}^rc_kt^k\right)^2=\sum_{j,k=0}^rc_jc_kt^{j+k}
\]
is always positive and so \eqref{t:polynom>0} yields
\[
\sum_{j,k=0}^rc_jc_k\omega_{j+k}>0.
\]
It shows that the Hankel matrix $(\omega_{j+k})_{j,k=0}^r$ is positive
definite. On the other hand, for any $d_1,\dots,d_r\in\R$, the polynomial
\[
(b-t)(t-a)\left(\sum_{k=1}^rd_kt^k\right)^2=\sum_{j,k=0}^rd_jd_k\bigl[(a+b)t^{j+k+1}-abt^{j+k}-t^{j+k+2}\bigr]
\]
is positive on $[a,b]$, and so \eqref{t:polynom>0} yields
\[
\sum_{j,k=0}^rd_jd_k\bigl[(a+b)\omega_{j+k+1}-ab\omega_{j+k}-\omega_{j+k+2}\bigr]>0.
\]
Therefore the matrix
$\bigl((a+b)\omega_{j+k+1}-ab\omega_{j+k}-\omega_{j+k+2}\bigr)_{j,k=0}^r$
is positive definite. One can argue the odd case $m=2r+1$ in a similar
way.

Conversely, we now assume that the Hankel matrices $(a_{j,k})_{j,k=0}^r$,
$(b_{j,k})_{j,k=0}^r$ are positive definite. Markov-Luk\'acs' theorem (see
\cite[Theorem~2.2 in c.~3]{KrNu}) provides a representation of the
non-negative polynomials on an interval $[a,b]$: any such polynomial
$P(t)$ of degree $m$ can be expressed as
\begin{equation}\label{e:P(t)>0}
P(t)=\left\{\begin{array}{ll}
\!\!\left(\sum_{i=0}^rc_it^i\right)^2\!+(b-t)(t-a)\left(\sum_{i=0}^{r-1}d_it^i\right)^2
    & \text{ if } m=2r,\\
\!\!(t-a)\left(\sum_{i=0}^rc_it^i\right)^2\!+(b-t)\left(\sum_{i=0}^{r-1}d_it^i\right)^2
    & \text{ if } m=2r\!+\!1
\end{array}\right.
\end{equation}
for $c_i,d_i\in\R$. Then, in the even case $m=2r$, for any positive
polynomial $P(t)$ written as in \eqref{e:P(t)>0} we have that
\[
\sum_{j,k=0}^rc_jc_k\omega_{j+k}+\sum_{j,k=0}^rd_jd_k\bigl[(a+b)\omega_{j+k+1}-ab\omega_{j+k}-\omega_{j+k+2}\bigr]>0
\]
because the matrices $(\omega_{j+k})_{j,k=0}^r$ and
$\bigl((a+b)\omega_{j+k+1}-ab\omega_{j+k}-\omega_{j+k+2}\bigr)_{j,k=0}^r$
are positive definite and so both summands are positive. When $m=2r+1$ we
argue in the same way. Thus, in both cases \eqref{t:polynom>0} shows
\eqref{e:cond_cone}, which concludes the proof.
\end{proof}

\subsection{New inequalities for dual querma\ss integrals and the moment problem}\label{ss:moment}

Next we prove Theorem \ref{t:family_ineq_dual}, which allows us to obtain
new inequalities for the dual querma\ss integrals.

\begin{proof}[Proof of Theorem \ref{t:family_ineq_dual}]
Let $f:\s^{n-1}\longrightarrow(0,\infty)$ be the function defined by
\[
f(u)=\frac{\rho_L(u)}{\rho_K(u)}
\]
and, for each sequence $(a_i)_{i\in\N}\subset\R$ and any $m\in\N$, let
$P_{1,m}$ and $P_{2,m}$ be the polynomials
\[
\begin{split}
P_{1,m}(x) & =\left(\sum_{i=0}^ma_ix^i\right)^2=\sum_{i,j=0}^ma_ia_jx^{i+j},\\
P_{2,m}(x) &
    =x\left(\sum_{i=0}^{m-1}a_ix^i\right)^2=\sum_{i,j=0}^ma_ia_jx^{i+j+1},
\end{split}
\]
which are positive for all $x\in(0,\infty)$. Consequently,
\[
\begin{split}
\frac{1}{n}\int_{\s^{n-1}}P_{1,m}\bigl(f(u)\bigr)\,\rho_K^n(u)\,\dlat\sigma(u) & >0\quad\text{ and}\\[2mm]
\frac{1}{n}\int_{\s^{n-1}}P_{2,m}\bigl(f(u)\bigr)\,\rho_K^n(u)\,\dlat\sigma(u)
 & >0,
\end{split}
\]
and thus, for every sequence $(a_i)_{i\in\N\cup\{0\}}$ and any $m\in\N$,
we have (cf. \eqref{e:dualW_i_int2})
\[
\begin{split}
\sum_{i,j=0}^ma_ia_j &
    \left(\frac{1}{n}\int_{\s^{n-1}}\!\!\!\left(\frac{\rho_L(u)}{\rho_K(u)}\right)^{i+j}\rho_K^n(u)\,\dlat\sigma(u)\right)
    =\sum_{i,j=0}^ma_ia_j\Wd_{i+j}(K,L)>0,\\
\sum_{i,j=0}^{m-1}a_ia_j &
    \left(\frac{1}{n}\int_{\s^{n-1}}\!\!\!\left(\frac{\rho_L(u)}{\rho_K(u)}\right)^{i+j+1}
    \!\!\!\!\rho_K^n(u)\,\dlat\sigma(u)\right)
    \!=\!\sum_{i,j=0}^{m-1}a_ia_j\Wd_{i+j+1}(K,L)>0.
\end{split}
\]
Therefore, $A_m$ and $B_m$ are positive definite matrices.
\end{proof}

We observe that Theorem \ref{t:family_ineq_dual} implies, in particular,
that for every $i<j$, $i,j\in\N\cup\{0\}$, the matrices
\[
\begin{pmatrix}
\Wd_{2i}(K,L) & \Wd_{i+j}(K,L)\\
\Wd_{i+j}(K,L) & \Wd_{2j}(K,L)
\end{pmatrix},\quad
\begin{pmatrix}
\Wd_{2i+1}(K,L) & \Wd_{i+j+1}(K,L)\\
\Wd_{i+j+1}(K,L) & \Wd_{2j+1}(K,L)
\end{pmatrix}
\]
are positive definite, and hence have a positive determinant. Thus we
obtain particular cases of the dual Aleksandrov-Fenchel inequalities
\eqref{e:special_dual_af}:
\[
\begin{split}
\Wd_{2i}(K,L)\Wd_{2j}(K,L) & >\Wd_{i+j}^2(K,L),\\
\Wd_{2i+1}(K,L)\Wd_{2j+1}(K,L) & >\Wd_{i+j+1}^2(K,L).
\end{split}
\]
Taking different submatrices we obtain a family of inequalities.

\begin{remark}
We would like to notice that the determinantal inequalities in
Corollary~\ref{c:ineq_dual} cannot be obtained from
\eqref{e:special_dual_af}. For instance, $\Delta_2>0$ does not hold if we
consider a sequence of numbers
$\omega_0>\omega_1>\omega_2=\omega_3=\omega_4$ (cf.~\eqref{e:Favard_dual})
satisfying also \eqref{e:special_dual_af}.
\end{remark}

We conclude this section by obtaining new inequalities for the dual
querma\ss integrals of two star bodies as a consequence of the moment
problem.

The moment problem seeks necessary and sufficient conditions for a
sequence $(m_i)_{i\in\N\cup\{0\}}$ to be the {\it moments} of some measure
$\mu$ on the real line. There are different versions of this problem, our
interest being focused to the case of a fixed interval $[a,b]$ (the
Hausdorff moment problem).

The solution to the Hausdorff moment problem is given by the following
result (see e.g. \cite[c.~3, Theorem~2.5]{KrNu}, cf.
Theorem~\ref{CharacterizationDualMixedVolumes2}).

\begin{theorem}\label{SolutionMomentProblem2}
Let $[a,b]\subset\R$ and let $(m_i)_{i\in\N\cup\{0\}}$ be a sequence of
positive numbers. Then, there exists a positive measure $\mu$ on $[a,b]$
with $m_i(\mu)=m_i$ for all $i\in\N\cup\{0\}$, if and only if, for every
$m\in\N$, the Hankel matrices $A_m=(a_{j,k})_{j,k=0}^m$ and
$B_m=(b_{j,k})_{j,k=0}^m$ are positive semi-definite, where either
\[
\begin{split}
\text{i) } & \; a_{j,k}=m_{j+k}\;\;\text{ and }\;\;
b_{j,k}=(a+b)m_{j+k+1}-ab\,m_{j+k}-m_{j+k+2},\;\text{ or}\\[1mm]
\text{ii) } & \; a_{j,k}=m_{j+k+1}-a\,m_{j+k}\;\;\text{ and }\;\;
b_{j,k}=b\,m_{j+k}-m_{j+k+1}.
\end{split}
\]
\end{theorem}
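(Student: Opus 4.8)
The plan is to prove this in the classical way, via the Markov--Luk\'acs description of the polynomials non-negative on $[a,b]$ --- exactly as in the proof of Theorem~\ref{CharacterizationDualMixedVolumes2} --- combined with a Riesz-type representation of positive functionals on $C[a,b]$.

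For \emph{necessity} I would argue directly. If $\mu$ is a positive measure on $[a,b]$ with $m_i(\mu)=m_i$ for all $i$, then for arbitrary reals $c_0,\dots,c_m$ the polynomial $\bigl(\sum_{k=0}^m c_kt^k\bigr)^2$ is non-negative on $\R$, while $(t-a)(b-t)\bigl(\sum_{k=0}^m c_kt^k\bigr)^2$, $(t-a)\bigl(\sum_{k=0}^m c_kt^k\bigr)^2$ and $(b-t)\bigl(\sum_{k=0}^m c_kt^k\bigr)^2$ are non-negative on $[a,b]$, since $t-a\ge 0$, $b-t\ge 0$ and $(t-a)(b-t)\ge 0$ there. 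Integrating these against $\mu$ and expanding in powers of $t$ gives precisely the inequalities $\sum_{j,k}c_jc_k a_{j,k}\ge 0$ and $\sum_{j,k}c_jc_k b_{j,k}\ge 0$ in variant i) (resp.\ variant ii)), so $A_m$ and $B_m$ are positive semi-definite for every $m$.

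For \emph{sufficiency} I would introduce the linear functional $L$ on the space of real polynomials defined by $L(t^i)=m_i$, and the key step is to show $L(P)\ge 0$ for every polynomial $P$ that is non-negative on $[a,b]$. This is the computation already carried out in the proof of Theorem~\ref{CharacterizationDualMixedVolumes2}: by the Markov--Luk\'acs theorem (\cite[Theorem~2.2 in c.~3]{KrNu}, cf.~\eqref{e:P(t)>0}), after adjusting the degree, such a $P$ is written in the normal form matching the variant one uses --- an SOS term plus a $(t-a)(b-t)\cdot(\text{SOS})$ term in variant i), or a $(t-a)\cdot(\text{SOS})$ plus a $(b-t)\cdot(\text{SOS})$ term in variant ii) --- and applying $L$ converts this into a sum of two Hankel quadratic forms, $\sum_{j,k}c_jc_k a_{j,k}+\sum_{j,k}d_jd_k b_{j,k}$, which is $\ge 0$ because all the matrices $A_m,B_m$ are positive semi-definite by hypothesis. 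Once this is established, I would observe that $\|P\|_{C[a,b]}\pm P\ge 0$ on $[a,b]$, so $|L(P)|\le m_0\,\|P\|_{C[a,b]}$; hence $L$ is bounded for the supremum norm on $[a,b]$, and since polynomials are dense in $C[a,b]$ (Stone--Weierstrass) it extends to a bounded positive linear functional on $C[a,b]$. The Riesz representation theorem then yields a positive Borel measure $\mu$ supported on $[a,b]$ with $\int_a^b g\,\dlat\mu=L(g)$ for every $g\in C[a,b]$; in particular $m_i(\mu)=L(t^i)=m_i$ for all $i\in\N\cup\{0\}$.

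The step I expect to be the main obstacle is the core of the sufficiency part: one must bring an \emph{arbitrary} polynomial non-negative on $[a,b]$ --- in particular one of odd degree --- into exactly one of the Markov--Luk\'acs normal forms while tracking which squares appear, so that the resulting quadratic forms are precisely the ones governed by the matrices $A_m$ and $B_m$ with the stated indexing, and one has to verify that this works uniformly in $m$, so that semi-definiteness of the entire family $\{A_m,B_m\}_{m\in\N}$ (rather than of finitely many sections) is what is actually used. As an alternative to this route, one could perturb $m_i\mapsto m_i+\varepsilon\int_a^b t^i\,\dlat t$ to make the Hankel forms positive definite, apply the strict statement behind Theorem~\ref{CharacterizationDualMixedVolumes2} to obtain, for each truncation length, a representing probability measure on $[a,b]$, and extract a weak-$*$ limit as the truncation length grows and $\varepsilon\to 0$; since $[a,b]$ is compact no mass can escape, and the obstacle then shifts to the (routine) diagonal extraction.
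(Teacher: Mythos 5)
The paper does not give a proof of this theorem: it is quoted as the classical solution of the Hausdorff moment problem, with a pointer to \cite[c.~3, Theorem~2.5]{KrNu}, so there is no in-paper argument to compare against. Your sketch is a correct reconstruction of the standard proof of that classical result, via Markov--Luk\'acs plus Riesz representation, and the alternative route you mention (perturb to strict positive definiteness, represent each truncation, pass to a weak-$*$ limit over the compact $[a,b]$) is also sound.

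On the step you single out as the main obstacle: it is not a genuine gap, only bookkeeping. The Markov--Luk\'acs theorem characterizes polynomials non-negative on $[a,b]$ of degree \emph{at most} $m$, so you may always pad the degree of $P$ to whichever parity matches the variant of the hypothesis you are working with. Given the matrices of variant~i), write $P$ (of whatever degree $d$) as a polynomial of degree $\le 2r$ for some $r$ with $2r\ge d$ and use the even normal form $P=Q_1^2+(b-t)(t-a)Q_2^2$ with $\deg Q_1\le r$, $\deg Q_2\le r-1$; applying the functional $L$ gives $c^{T}A_{r}c+d^{T}B_{r-1}d\ge 0$, where $c,d$ are the coefficient vectors. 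Given only variant~ii), pad to the next odd degree and use $P=(t-a)Q_1^2+(b-t)Q_2^2$, which under $L$ becomes a sum of the two Hankel forms of variant~ii). In either case, it is precisely positive semi-definiteness of the whole family $\{A_m,B_m\}_{m\in\N}$ that is invoked, since $r$ runs through all of $\N$ as $\deg P$ grows. The boundedness estimate $|L(P)|\le m_0\,\|P\|_{C[a,b]}$ and the Riesz extension then go through exactly as you describe.
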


The above result provides new properties of dual querma\ss integrals. For
instance, we obtain the following result, whose proof is essentially
contained in the proofs of Theorems
\ref{CharacterizationDualMixedVolumes1} and
\ref{CharacterizationDualMixedVolumes2}.  It can be translated in further
inequalities between the querma\ss integrals.

\begin{proposition}\label{p:dual_quermass_Hankel}
Let $K,L\in\S^n_0$ be two star bodies and let
\[
a=\min_{u\in\s^{n-1}}\frac{\rho_L(u)}{\rho_K(u)}\quad\text{and}\quad
b=\max_{u\in\s^{n-1}}\frac{\rho_L(u)}{\rho_K(u)}.
\]
Then, for all $m\in\N$, the Hankel matrices $(a_{j,k})_{j,k=0}^m$,
$(b_{j,k})_{j,k=0}^m$ given by
\[
\begin{split}
a_{j,k} & =\Wd_{j+k+1}(K,L)-a\Wd_{j+k}(K,L),\\
b_{j,k} & =b\Wd_{j+k+1}(K,L)-\Wd_{j+k}(K,L),
\end{split}
\]
are positive semi-definite.
\end{proposition}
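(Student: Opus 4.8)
The proof runs in exact parallel to that of Theorem~\ref{t:family_ineq_dual}, the only change being that the squared polynomials $P_{1,m}$, $P_{2,m}$ used there are replaced by polynomials that are non-negative on the bounded interval $[a,b]$ rather than on all of $(0,\infty)$. First I would recall the integral representation established inside the proof of Theorem~\ref{CharacterizationDualMixedVolumes1}: writing $f(u)=\rho_L(u)/\rho_K(u)$, letting $\nu$ be the measure on $\s^{n-1}$ with $\dlat\nu=(1/n)\rho_K^n(u)\,\dlat\sigma(u)$, and letting $\mu$ be the push-forward of $\nu$ under $f$ — a finite positive measure supported on $[a,b]$, of total mass $\mu([a,b])=\Wd_0(K,L)=|K|$ — one has
\[
\Wd_i(K,L)=\int_a^b t^i\,\dlat\mu(t)\qquad\text{for every }i\in\R.
\]

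With this at hand, fix arbitrary reals $c_0,\dots,c_m$. Interchanging the finite sum with the integral (legitimate, since $\mu$ is finite and the integrand is a polynomial in $t$, continuous on the compact set $[a,b]$), I would compute
\[
\sum_{j,k=0}^m c_jc_k\,a_{j,k}
=\sum_{j,k=0}^m c_jc_k\bigl[\Wd_{j+k+1}(K,L)-a\Wd_{j+k}(K,L)\bigr]
=\int_a^b (t-a)\Bigl(\sum_{j=0}^m c_jt^j\Bigr)^{2}\dlat\mu(t),
\]
which is $\geq 0$ because $t-a\geq 0$ for every $t\in[a,b]$; hence $(a_{j,k})_{j,k=0}^m$ is positive semi-definite. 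In exactly the same way,
\[
\sum_{j,k=0}^m c_jc_k\,b_{j,k}
=\int_a^b (b-t)\Bigl(\sum_{j=0}^m c_jt^j\Bigr)^{2}\dlat\mu(t)\geq 0,
\]
since $b-t\geq 0$ on $[a,b]$, so $(b_{j,k})_{j,k=0}^m$ is positive semi-definite as well.

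There is no genuine obstacle here; the only points needing a little care are that $\mu$ really is a finite positive measure carried by $[a,b]$ — which is precisely what the construction in the proof of Theorem~\ref{CharacterizationDualMixedVolumes1} provides, including the degenerate case $a=b$, where $\mu=|K|\,\delta_a$ and both matrices reduce to the zero matrix — and that one may pass the finite sum through the integral. Note that, in contrast with Theorem~\ref{t:family_ineq_dual}, only \emph{semi}-definiteness is asserted, consistent with the fact that the weights $t-a$ and $b-t$ vanish at an endpoint of $[a,b]$, so the integrals above can genuinely equal $0$ (e.g.\ for a suitable $L=aK$ or $L=bK$). Finally, one could equivalently phrase the statement as the specialization of the necessity part of version~ii) of Theorem~\ref{SolutionMomentProblem2} to the moment sequence $m_i=\Wd_i(K,L)$, $i\in\N\cup\{0\}$, together with the identification of $a$ and $b$ as the minimum and maximum of $\rho_L/\rho_K$ on $\s^{n-1}$.
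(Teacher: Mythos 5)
Your proof is correct and follows essentially the same path as the paper: construct the pushforward measure $\mu$ of $\dlat\nu=(1/n)\rho_K^n\,\dlat\sigma$ under $f=\rho_L/\rho_K$, a finite positive measure supported on $[a,b]$, and recognize $\Wd_i(K,L)=\int_a^b t^i\,\dlat\mu(t)$. The only divergence is at the final step: the paper simply cites Theorem~\ref{SolutionMomentProblem2}~ii), whereas you unpack that citation into the direct identity $\sum_{j,k}c_jc_k\,a_{j,k}=\int_a^b(t-a)\bigl(\sum_j c_jt^j\bigr)^2\dlat\mu\geq 0$, which is exactly the easy (necessity) direction of that theorem; this makes the argument self-contained, and your remark on the degenerate case $a=b$ (both matrices vanish) is a useful addition the paper leaves implicit. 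One thing you should flag rather than silently repair: your computation, like Theorem~\ref{SolutionMomentProblem2}~ii), establishes positive semi-definiteness of the Hankel matrix with entries $b\,\Wd_{j+k}(K,L)-\Wd_{j+k+1}(K,L)$, whereas Proposition~\ref{p:dual_quermass_Hankel} as printed has $b_{j,k}=b\,\Wd_{j+k+1}(K,L)-\Wd_{j+k}(K,L)$. The printed version corresponds to the weight $bt-1$, which is not non-negative on $[a,b]$ unless $ab\geq 1$, so it is false in general; it is a typo, and the paper's own proof (via Theorem~\ref{SolutionMomentProblem2}~ii)) in fact proves the same corrected statement you do, but you should say so explicitly.
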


\begin{proof}
Following the notation in the proof of Theorem
\ref{CharacterizationDualMixedVolumes1}, we denote by
$f:\s^{n-1}\longrightarrow(0,\infty)$ the continuous function
$f(u)=\rho_L(u)/\rho_K(u)$, by $\nu$ the measure on the sphere
$\dlat\nu=(1/n)\rho_K^n(u)\,\dlat\sigma(u)$ and by $\mu$ the push-forward
measure of $\nu$ by $f$. Then, for every $i\in\N\cup\{0\}$ we have
\[
m_i(\mu)=\int_{\s^{n-1}}f(u)^i\,\dlat\nu(u)
=\frac{1}{n}\int_{\s^{n-1}}\left(\frac{\rho_L(u)}{\rho_K(u)}\right)^i\rho_K^{n}(u)\,\dlat\sigma(u)
=\Wd_i(K,L);
\]
by Theorem \ref{SolutionMomentProblem2} ii), the given Hankel matrices are
positive semi-definite.
\end{proof}

An analogous result can be obtained taking the Hankel matrices given by i)
in Theorem \ref{SolutionMomentProblem2}.

\section{The set of roots of dual Steiner polynomials}\label{s:roots}

We start collecting some properties on the behavior of the roots of dual
Steiner polynomials when the involved bodies slightly change. They will be
used for the proof of Theorem \ref{t:cone_half-open_monotone}. We notice
that these properties are analogous to the ones of the relative Steiner
polynomial; we include the proof for completeness.

\begin{lemma}\label{lem:roots_change}
Let $\gamma$ be a root of the dual Steiner polynomial $\f{K}{L}(z)$.
\begin{enumerate}
\item Let $\lambda>0$. Then $\lambda\,\gamma$ is a root of
$\f{\lambda\,K}{L}(z)$.
\item Let $\mu\geq 0$. Then  $\gamma-\mu$ is a root of $\f{K\widetilde{+}\mu L}{L}(z)$.
\item Let $\gamma=a+b\,\im$ with $a<0$, and let $0<\rho\leq 1$. Then
$a+(\rho\,b)\,\im$ is a root of $\f{\rho K\widetilde{+}(\rho-1)aL}{L}(z)$.
\end{enumerate}
\end{lemma}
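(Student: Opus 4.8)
The plan is to exploit the functional form of the dual Steiner polynomial together with the behavior of the dual quermaßintegrals under the operations involved. Recall that $\f{K}{L}(z)=\sum_{i=0}^n\binom{n}{i}\Wd_i(K,L)z^i$, and from \eqref{eq:dual_quermass} the dual quermaßintegrals are homogeneous of degree $n-i$ in the first argument, degree $i$ in the second, and transform additively under the radial sum in the first slot (since $\rho_{K\widetilde{+}\mu L}=\rho_K+\mu\rho_L$). The three statements will follow by a short substitution in each case.

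For (1), homogeneity gives $\Wd_i(\lambda K,L)=\lambda^{n-i}\Wd_i(K,L)$, hence
\[
\f{\lambda K}{L}(z)=\sum_{i=0}^n\binom{n}{i}\lambda^{n-i}\Wd_i(K,L)z^i=\lambda^n\sum_{i=0}^n\binom{n}{i}\Wd_i(K,L)(z/\lambda)^i=\lambda^n\,\f{K}{L}(z/\lambda),
\]
so the roots of $\f{\lambda K}{L}$ are exactly $\lambda$ times the roots of $\f{K}{L}$; in particular $\lambda\gamma$ is a root. For (2), the identity $\rho_{K\widetilde{+}\mu L}^n=(\rho_K+\mu\rho_L)^n$ integrated against $\rho_L^i$ via \eqref{eq:dual_quermass}, or more simply the fact that $|(K\widetilde{+}\mu L)\widetilde{+}\lambda L|=|K\widetilde{+}(\mu+\lambda)L|$ as a polynomial identity in $\lambda$, yields by \eqref{eq:dual_steiner} that $\f{K\widetilde{+}\mu L}{L}(z)=\f{K}{L}(z+\mu)$. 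Therefore $\gamma-\mu$ is a root of $\f{K\widetilde{+}\mu L}{L}$ whenever $\gamma$ is a root of $\f{K}{L}$. One only has to note that $K\widetilde{+}\mu L\in\S^n_0$, which is immediate since its radial function $\rho_K+\mu\rho_L$ is positive and continuous.

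For (3), write $\gamma=a+b\,\im$ with $a<0$ and $0<\rho\le 1$. The idea is to combine (1) and (2): first scale by $\rho$, sending $\gamma\mapsto\rho\gamma=\rho a+(\rho b)\im$, obtaining a root of $\f{\rho K}{L}(z)$; then translate by $\mu=(1-\rho)a\ge 0$ (here $a<0$ and $1-\rho\ge 0$ force $\mu\ge 0$, so the radial sum is legitimate), which by (2) sends $\rho\gamma\mapsto\rho\gamma-\mu=\rho\gamma-(1-\rho)a=a+(\rho b)\im$, and this is a root of $\f{(\rho K)\widetilde{+}\mu L}{L}(z)=\f{\rho K\widetilde{+}(\rho-1)aL}{L}(z)$, matching the claimed body since $\mu=(1-\rho)a=-(\rho-1)a$ and thus $\mu L=(1-\rho)a L$; the sign bookkeeping $(\rho-1)aL=(1-\rho)(-a)L=\mu L$ with $\mu\ge0$ is exactly what makes the radial sum well-defined. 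The only point requiring care — and the main (minor) obstacle — is this sign and well-definedness check: one must verify that $(\rho-1)a\ge 0$ so that $\rho K\widetilde{+}(\rho-1)aL$ is a genuine star body in $\S^n_0$, which holds precisely because $a<0$ and $\rho\le 1$. Everything else is a direct consequence of the two elementary substitution identities established in (1) and (2).
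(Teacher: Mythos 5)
Your proof is correct and follows essentially the same approach as the paper: establish the substitution identities $\f{\lambda K}{L}(z)=\lambda^n\f{K}{L}(z/\lambda)$ and $\f{K\widetilde{+}\mu L}{L}(z)=\f{K}{L}(z+\mu)$, then obtain (3) by composing them (you apply (1) then (2), the paper applies (2) then (1) and invokes $\rho K\widetilde{+}(\rho-1)aL=\rho\bigl(K\widetilde{+}[(\rho-1)a/\rho]L\bigr)$, a cosmetic difference in which your order is marginally cleaner). One slip: you define $\mu=(1-\rho)a$ and assert $\mu\geq0$, but $a<0$ and $1-\rho\geq0$ give $(1-\rho)a\leq0$; the correct shift is $\mu=(\rho-1)a\geq0$, and with that value the computation $\rho\gamma-\mu=\rho a+\rho b\,\im-(\rho-1)a=a+\rho b\,\im$ and the identification of the resulting body $\rho K\widetilde{+}\mu L=\rho K\widetilde{+}(\rho-1)aL$ go through as intended.
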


\begin{proof}
Since $\Wd_i(\lambda\,K,L)=\lambda^{n-i}\,\Wd_i(K,L)$ for any
$i=0,\dots,n$, we have $\f{\lambda K}{L}(z)
=\lambda^n\,\f{K}{L}(z/\lambda)$, which shows i).

Now, for any non-negative numbers $\lambda,\mu\geq 0$, the radial addition
of star bodies satisfies $\mu L\widetilde{+}\lambda L=(\mu+\lambda)L$ (see
e.g. \cite[(2.2)]{Lut2}), and hence
\[
\Bigl|\bigl(K\widetilde{+}\mu L\bigr)\widetilde{+}\lambda L\Bigr|
=\bigl|K\widetilde{+}(\mu+\lambda)L\bigr|=\sum_{i=0}^n\binom{n}{i}\Wd_i(K,L)\,(\mu+\lambda)^i.
\]
Therefore $\f{K\widetilde{+}\mu L}{L}(z)=\f{K}{L}(\mu+z)$, which implies
ii).

Finally, iii) is just a combination of ii) and i), because the radial
addition also satisfies
$\rho\,K\widetilde{+}(\rho-1)aL=\rho\bigl(K\widetilde{+}[(\rho-1)a/\rho]L\bigr)$
(see again \cite[(2.2)]{Lut2}).
\end{proof}

The following proposition states that both the derivative and the
antiderivative of a dual Steiner polynomial are also dual Steiner
polynomials. This result will be also crucial in the proof of Theorem
\ref{t:cone_half-open_monotone}.

\begin{proposition}\label{p:derivative_integral}
Let $K,L\in\S^n_0$ be two star bodies.
\begin{enumerate}
\item[i)] There exist $K',L'\in\S^{n-1}_0$ such that
\[
\dfrac{\dlat\f{K}{L}}{\dlat z}(z)=\f{K'}{L'}(z).
\]
\item[ii)] There exist $K'',L''\in\S^{n+1}_0$ such that
\[
\dfrac{\dlat\f{K''}{L''}}{\dlat z}(z)=\f{K}{L}(z).
\]
\end{enumerate}
\end{proposition}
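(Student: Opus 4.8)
The plan is to write out the coefficients of $\dlat\f{K}{L}/\dlat z$ (and of a prospective antiderivative) explicitly, to recognise them, up to a dimensional factor, as a \emph{shifted} sequence of dual querma\ss integrals of $K$ and $L$, and then to realise that shifted sequence as the dual querma\ss integrals of a suitable pair of star bodies in dimension $n-1$, resp.\ $n+1$, by invoking Theorem~\ref{CharacterizationDualMixedVolumes1}. The only arithmetic input is the elementary identity $(i+1)\binom{m}{i+1}=m\binom{m-1}{i}$. For $m=n$ it gives
\begin{align*}
\frac{\dlat\f{K}{L}}{\dlat z}(z)
&=\sum_{j=0}^{n-1}(j+1)\binom{n}{j+1}\Wd_{j+1}(K,L)\,z^j\\
&=n\sum_{j=0}^{n-1}\binom{n-1}{j}\Wd_{j+1}(K,L)\,z^j,
\end{align*}
so that i) amounts to exhibiting $K',L'\in\S^{n-1}_0$ with $\Wd_j(K',L')=\omega_j:=n\,\Wd_{j+1}(K,L)$ for $j=0,\dots,n-1$. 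For $m=n+1$ the same identity shows that $\frac{\dlat}{\dlat z}\f{K''}{L''}(z)$ equals $(n+1)\sum_{j=0}^{n}\binom{n}{j}\Wd_{j+1}(K'',L'')\,z^j$, so that ii) amounts to exhibiting $K'',L''\in\S^{n+1}_0$ with $\Wd_i(K'',L'')=\omega_i$ for $i=0,\dots,n+1$, where $\omega_i:=\tfrac1{n+1}\Wd_{i-1}(K,L)$ for $i\geq 1$ while $\omega_0>0$ is left free, since the constant term of $\f{K''}{L''}$ is annihilated by the derivative.

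To check the hypotheses of Theorem~\ref{CharacterizationDualMixedVolumes1} I would recycle the push-forward measure used in its proof: writing $f(u)=\rho_L(u)/\rho_K(u)$, $a=\min_{u\in\s^{n-1}}f(u)$, $b=\max_{u\in\s^{n-1}}f(u)$, $\dlat\nu=(1/n)\rho_K^n(u)\,\dlat\sigma(u)$ and letting $\mu$ be the image of $\nu$ under $f$, one has $\Wd_i(K,L)=\int_a^b t^i\,\dlat\mu(t)$ for every $i\in\R$, while $\mu$ has full support on $[a,b]\subset(0,\infty)$ and $\mu\bigl([a,b]\bigr)=|K|$. Because $t$ is bounded away from $0$ and from $\infty$ on $[a,b]$, the measures $\dlat\mu'(t)=n\,t\,\dlat\mu(t)$ and $\dlat\mu''(t)=\tfrac1{n+1}t^{-1}\,\dlat\mu(t)$ are again positive and of full support on $[a,b]$, and one computes $\omega_j=\int_a^b t^j\,\dlat\mu'(t)$ for $j=0,\dots,n-1$ (case i), and, choosing $\omega_0:=\tfrac1{n+1}\int_a^b t^{-1}\,\dlat\mu(t)>0$, also $\omega_i=\int_a^b t^i\,\dlat\mu''(t)$ for $i=0,\dots,n+1$ (case ii). If $a<b$, then normalising $\mu'$ (resp.\ $\mu''$) to a probability measure of full support and applying Lemma~\ref{lemmaMeasuresFullSupport} to the curve $t\mapsto(t,t^2,\dots,t^{n-1})$ (resp.\ $t\mapsto(t,t^2,\dots,t^{n+1})$) yields $(\omega_j)_{j=0}^{n-1}\in\inter C_{a,b}^{n-1}$ (resp.\ $(\omega_i)_{i=0}^{n+1}\in\inter C_{a,b}^{n+1}$); this is exactly the argument used in the first half of the proof of Theorem~\ref{CharacterizationDualMixedVolumes1}. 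If $a=b$, then $f\equiv a$, hence $L=aK$ and $\Wd_i(K,L)=a^i|K|$, so that $\omega_i=a^i\omega_0$ in both cases. In either situation Theorem~\ref{CharacterizationDualMixedVolumes1}, applied in dimension $n-1$ (resp.\ $n+1$), produces the required $K',L'\in\S^{n-1}_0$ (resp.\ $K'',L''\in\S^{n+1}_0$); substituting these dual querma\ss integrals back into the expansions above gives $\f{K'}{L'}=\dlat\f{K}{L}/\dlat z$ and $\dlat\f{K''}{L''}/\dlat z=\f{K}{L}$.

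The characterization theorem requires ambient dimension $\geq 2$, so this argument applies verbatim for all $n\geq 2$ in case ii) and for $n\geq 3$ in case i); the remaining case $n=2$ of i) must be handled by hand, but there $\dlat\f{K}{L}/\dlat z\,(z)=2\Wd_1(K,L)+2\Wd_2(K,L)\,z$ is a polynomial of degree $\leq 1$ with positive coefficients, which is trivially the dual Steiner polynomial of any $K',L'\in\S^1_0$ with $|K'|=2\Wd_1(K,L)$ and $|L'|=2\Wd_2(K,L)$. I do not expect a genuine obstacle here: the crux is simply that the strictly positive, continuous weights $t^{\pm1}$ on $[a,b]\subset(0,\infty)$ turn $\mu$ into new positive measures that still have full support, so the shifted moment sequences remain admissible inputs for Theorem~\ref{CharacterizationDualMixedVolumes1}; the only care needed is the bookkeeping with the binomial coefficients and the low-dimensional degeneracy just noted.
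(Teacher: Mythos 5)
Your proof is correct and produces the same conclusions, but the route by which you verify the hypothesis of Theorem~\ref{CharacterizationDualMixedVolumes1} is genuinely different from the paper's. Both proofs reduce matters to showing that the shifted, rescaled sequence $\bigl(n\Wd_{j+1}(K,L)\bigr)_{j=0}^{n-1}$, resp.\ a suitable extension of $\bigl(\tfrac1{n+1}\Wd_{i-1}(K,L)\bigr)_{i=1}^{n+1}$, lies in $\inter C_{a,b}^{n-1}$, resp.\ $\inter C_{a,b}^{n+1}$, and then apply the characterization theorem in dimension $n\mp1$. The paper does this geometrically: it first records, via Theorem~\ref{CharacterizationDualMixedVolumes1} for $(L,K)$, that $\bigl(\Wd_n,\dots,\Wd_0\bigr)\in\inter C_{a,b}^n$, and then exploits the fact that dropping the last coordinate projects $C_{a,b}^n$ onto $C_{a,b}^{n-1}$ (so interiority passes down for part i) and, conversely, that any interior point of $C_{a,b}^n$ lifts to an interior point of $C_{a,b}^{n+1}$ (for part ii), the extra coordinate becoming the constant of integration. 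You instead work analytically with the push-forward measure $\mu$ from the first half of the proof of Theorem~\ref{CharacterizationDualMixedVolumes1}: since $\Wd_i(K,L)=\int_a^b t^i\,\dlat\mu(t)$ with $\mu$ of full support and $[a,b]\subset(0,\infty)$, the tilted measures $n\,t\,\dlat\mu$ and $\tfrac1{n+1}t^{-1}\dlat\mu$ again have full support, and Lemma~\ref{lemmaMeasuresFullSupport} converts this into the required interiority. This is the same information packaged differently; the small advantages of your version are that the free constant of integration $\omega_0$ in ii) is produced explicitly (as $\tfrac1{n+1}\Wd_{-1}(K,L)$, using that dual querma\ss integrals are defined for negative index) rather than merely asserted to exist, and that you flag the low-dimensional issue: Theorem~\ref{CharacterizationDualMixedVolumes1} is stated only for ambient dimension $\geq2$, so part i) for $n=2$ needs the separate (and easy) one-dimensional argument you give, a point the paper glosses over. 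The dilate cases $a=b$ are handled identically.
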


\begin{proof}
We first notice that since $\Wd_{n-i}(K,L)=\Wd_i(L,K)$ for all
$i=0,\dots,n$, Theorem~\ref{CharacterizationDualMixedVolumes1} ensures
that,
\begin{enumerate}
\item[(a)] either there exist $0<a<b$ such that
\[
\bigl(\Wd_n(K,L),\Wd_{n-1}(K,L),\dots,\Wd_0(K,L)\bigr)\in\inter C_{a,b}^n,
\]
\item[(b)] or $\Wd_{n-i}(K,L)=\lambda^i\Wd_n(K,L)$ for some $\lambda>0$ and
all $i=1,\dots,n$.
\end{enumerate}

First we prove i). We observe that the derivative of $\f{K}{L}(z)$ can be
expressed as
\[
\dfrac{\dlat\f{K}{L}}{\dlat z}(z)
=\sum_{i=1}^n\binom{n}{i}i\Wd_i(K,L)z^{i-1}=\sum_{i=0}^{n-1}\binom{n-1}{i}n\Wd_{i+1}(K,L)z^i.
\]
If (a) holds then, in particular,
\[
\bigl(\Wd_n(K,L),\Wd_{n-1}(K,L),\dots,\Wd_1(K,L)\bigr)\in\inter
C_{a,b}^{n-1},
\]
and thus also
$\bigl(n\Wd_n(K,L),n\Wd_{n-1}(K,L),\dots,n\Wd_1(K,L)\bigr)\in\inter
C_{a,b}^{n-1}$. So, there exist star bodies $L',K'\in\S^{n-1}_0$ such that
\[
n\Wd_{n-i}(K,L)=\Wdk{n-1}{i}(L',K')=\Wdk{n-1}{n-i-1}(K',L')
\]
for all $i=0,\dots,n-1$, where $\Wdk{j}{i}$ denotes the $i$-th dual
querma\ss integral in $\R^j$. Therefore $\bigl(\dlat\f{K}{L}/\dlat
z\bigr)(z)$ is the dual Steiner polynomial in $\R^{n-1}$ of the sets
$K',L'$.

If (b) holds, then $K=\lambda L$ (see Lemma~\ref{l:quermass_prop} ii)),
and hence
\[
\dfrac{\dlat\f{K}{L}}{\dlat z}(z)=n|L|(\lambda+z)^{n-1}.
\]
Then, taking $L'\in\S^{n-1}_0$ such that $|L'|_{n-1}=n|L|$ and $K'=\lambda
L'$, we obtain
\[
\f{K'}{L'}(z)=\sum_{i=0}^{n-1}\binom{n-1}{i}\Wdk{n-1}{i}(K',L')z^i=|L'|_{n-1}(\lambda+z)^{n-1}
=\dfrac{\dlat\f{K}{L}}{\dlat z}(z).
\]
It concludes the proof of i).

Now we prove ii). First we notice that the antiderivative of $\f{K}{L}(z)$
can be expressed, for any constant $C\geq 0$, as
\[
\begin{split}
\int\f{K}{L}(z)\,\dlat z & =C+\sum_{i=0}^n\binom{n}{i}\frac{1}{i+1}\Wd_i(K,L)z^{i+1}\\
 & =C+\sum_{i=1}^{n+1}\binom{n+1}{i}\frac{1}{n+1}\Wd_{i-1}(K,L)z^i.
\end{split}
\]
Again we start assuming that (a) holds. Let
$H=\span\{\e_1,\dots,\e_{n+1}\}\subset\R^{n+2}$ be the linear subspace
spanned by the first $n+1$ canonical vectors $\e_i$. Since
$C_{a,b}^n=C_{a,b}^{n+1}|H$ is the orthogonal projection of the cone
$C_{a,b}^{n+1}$ onto $H$, there exists a point
$(\omega_0,\dots,\omega_{n+1})\in\inter C_{a,b}^{n+1}$ such that
\[
(\omega_0,\dots,\omega_{n+1})|H=\frac{1}{n+1}\bigl(\Wd_n(K,L),\Wd_{n-1}(K,L),\dots,\Wd_0(K,L)\bigr).
\]
By Theorem \ref{CharacterizationDualMixedVolumes1}, there exist star
bodies $L'',K''\in\S^{n+1}_0$ such that, for all $i=0,\dots,n+1$,
$\omega_i=\Wdk{n+1}{i}(L'',K'')$ and, consequently,
\[
\frac{1}{n+1}\Wd_{n-i}(K,L)=\omega_i=\Wdk{n+1}{i}(L'',K'')=\Wdk{n+1}{n+1-i}\,(K'',L''),
\]
for all $i=0,\dots,n$. Then, setting $C=\omega_{n+1}$ we have
\[
\int\f{K}{L}(z)\,\dlat z
=\sum_{i=0}^{n+1}\binom{n+1}{i}\Wdk{n+1}{i}(K'',L'')z^i=\f{K''}{L''}(z).
\]
Thus, $\bigl(\dlat\f{K''}{L''}/\dlat z\bigr)(z)=\f{K}{L}(z)$, as required.

Finally, if (b) holds, then $K=\lambda L$ (see Lemma~\ref{l:quermass_prop}
ii)). So it suffices to take $L''\in\S^{n+1}_0$ such that
$|L''|_{n+1}=|L|/(n+1)$ and $K''=\lambda L''$, which yields
\[
\begin{split}
\dfrac{\dlat\f{K''}{L''}}{\dlat z}(z) & =\dfrac{\dlat}{\dlat z}
    \left[\sum_{i=0}^{n+1}\binom{n+1}{i}\Wdk{n+1}{i}(K'',L'')z^i\right]
    \!=\dfrac{\dlat}{\dlat z}\Bigl[|L''|_{n+1}(\lambda+z)^{n+1}\Bigr]\\
 & =(n+1)|L''|_{n+1}(\lambda+z)^n=|L|(\lambda+z)^n=\f{K}{L}(z).
\end{split}
\]
This concludes the proof of the proposition.
\end{proof}

We are now ready for the proof of Theorem \ref{t:cone_half-open_monotone}.
We just introduce an additional notation: for complex numbers
$z_1,\dots,z_r\in\C$ let
\begin{equation*}
\sy{i}{z_1,\dots,z_r}=\sum_{\substack{J\subseteq\{1,\dots,r\}\\\#J=i}}
\prod_{j\in J}z_j
\end{equation*}
denote the $i$-th elementary symmetric function of $z_1,\dots,z_r$, $1\leq
i\leq r$, setting $\sy{0}{z_1,\dots,z_r}=1$.

\begin{proof}[Proof of Theorem \ref{t:cone_half-open_monotone}]
First we prove item (a).

Clearly $\f{L}{L}(z)=|L|(z+1)^n$ because $\Wd_i(L,L)=|L|$ for all
$i=0,\dots,n$, and hence $-1$ is a root of the polynomial $\f{L}{L}(z)$.
Thus, by Lemma \ref{lem:roots_change} i), every $-c\in\R_{<0}$, $c>0$,
will be a root of $\f{cL}{L}(z)$ for any $L\in\S^n_0$, and so the negative
real axis $\R_{<0}$ is contained in $\rd(n)$.

Lemma \ref{lem:roots_change} i) also shows that $\rd(n)$ is a cone without
apex, i.e., if $\gamma\in\rd(n)$ and $\lambda>0$, then
$\lambda\,\gamma\in\rd(n)$. It remains to show that $\rd(n)$ is convex.
For the proof, let $\gamma_i=a_i+b_i\im\in\rd(n)$, $i=1,2$, and let
$\rho\in(0,1)$.

If both roots $\gamma_1,\gamma_2\in\R_{<0}$, i.e., $b_i=0$ for $i=1,2$,
then Lemma~\ref{lem:roots_change}~i) ensures that
$\rho\gamma_1+(1-\rho)\gamma_2$ is a root of $\f{cL}{L}(z)$ for
$c=\rho|a_1|+(1-\rho)|a_2|$ and any $L\in\S^n_0$. So we assume that at
least one of them has strictly positive imaginary part.

First we show that there exist $K_i,L\in\S^n_0$, $i=1,2$, such that
$\gamma_i$ is a root of $\f{K_i}{L}(z)$, $i=1,2$. We may assume, without
loss of generality, that $b_1\leq b_2$, which yields $b_2>0$. We suppose
moreover that $a_1\leq a_2\leq 0$. Let $K_2,L\in\S^n_0$ be such that
$\f{K_2}{L}(\gamma_2)=0$. By Lemma \ref{lem:roots_change} i), setting
$\lambda=b_1/b_2$, we have that $\lambda\gamma_2=\lambda a_2+b_1\im$ is a
root of $\f{\lambda K_2}{L}(z)$ (see Figure \ref{f:roots}).

\begin{figure}[h]
\begin{center}
\includegraphics[width=6.5cm]{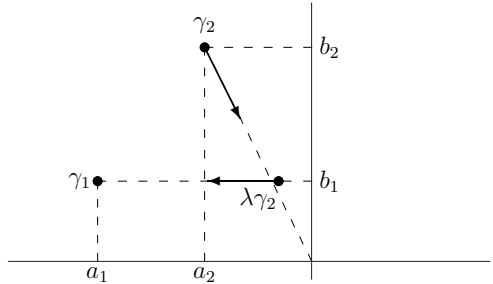}
\caption{Constructing $K_1\in\S^n_0$ such that
$\f{K_1}{L}(\gamma_1)=0$.}\label{f:roots}
\end{center}
\end{figure}

Furthermore, since $\lambda\leq 1$, we clearly have that $a_1\leq\lambda
a_2\leq 0$, and taking $\mu=\lambda a_2-a_1\geq 0$, Lemma
\ref{lem:roots_change} ii) ensures that $\gamma_1=\lambda\gamma_2-\mu$ is
a root of $\f{\lambda K_2\widetilde{+}\mu L}{L}(z)$. Thus, for
$K_1=\lambda K_2\widetilde{+}\mu L$ we get the desired property. The other
possibilities for $a_1,a_2$ can be argued in the same way by just choosing
properly the path between the two roots.

Finally, in order to prove the convexity of the cone, we just have to
construct a star body $M\in\S^n_0$ such that
$\rho\gamma_1+(1-\rho)\gamma_2$ is a root of $\f{M}{L}(z)$. Since at least
one of the roots has strictly positive imaginary part, we can take
$a_1/b_1=\max\{a_i/b_i:b_i>0,\,i=1,2\}$ and let
$\mu=b_1/\bigl(\rho\,b_1+(1-\rho)\,b_2\bigr)$. Then
\[
\nu=\mu\bigl(\rho\,a_1+(1-\rho)\,a_2\bigr)=b_1\frac{\rho\,a_1+(1-\rho)\,a_2}{\rho\,b_1+(1-\rho)\,b_2}\leq
a_1,
\]
because the above function is increasing in $\rho\in(0,1)$ by the choice
of $a_1/b_1$, and hence, Lemma \ref{lem:roots_change} ii) ensures that
$\nu+b_1\im$ is a root of $\f{K_1+(a_1-\nu)L}{L}$. Finally,
Lemma~\ref{lem:roots_change}~i) shows that $\rho\gamma_1+(1-\rho)\gamma_2$
is a root of the dual Steiner polynomial $\f{M}{L}(z)$ for
$M=(1/\mu)\,\bigl(K_1\widetilde{+}(a_1-\nu)L\bigr)$.

\smallskip

Next we prove item (b).

In order to show that $\rd(n)$ is half-opened, we are going to prove that
$C=\C^+\backslash\rd(n)$ is closed. Let $(z_i)_{i\in\N}\subset C$ be a
convergent sequence and let $z=\lim_{i\to\infty}z_i$. Clearly,
$z\not\in\R_{<0}$, and since $z_i\not\in\rd(n)$, then $z_i$ cannot be a
root of any dual Steiner polynomial.

For the sake of brevity we denote by $\Gamma$ any ($n-2$)-tuple of complex
numbers of the form
\[
\Gamma=\left\{\begin{array}{ll}
\bigl(\gamma_2,\overline{\gamma}_2,\dots,\gamma_{n/2},\overline{\gamma}_{n/2}\bigr)\in\C^{n-2}
    & \quad \text{if $n$ is even},\\[1mm]
\bigl(\gamma_2,\overline{\gamma}_2,\dots,\gamma_{(n-1)/2},\overline{\gamma}_{(n-1)/2},c\bigr)\in\C^{n-3}\times\R_{<0}
    & \quad\text{if $n$ is odd}.
\end{array}\right.
\]
We also denote by $\sigma:\C^n\longrightarrow\C^n\times\{1\}$ the
continuous map given by
\[
\sigma=\left((-1)^n\sigma_n,(-1)^{n-1}\dfrac{\sigma_{n-1}}{n},\dots,(-1)^i\dfrac{\sigma_i}{\binom{n}{i}},\dots,-\dfrac{\sigma_1}{n},1\right)
\]
Given an ($n-2$)-tuple $\Gamma$, let
$\sigma(z_i,\overline{z}_i,\Gamma)=(\omega_0^i,\dots,\omega_{n-1}^i,1)$
for all $i\in\N$. On one hand, since $z_i$ is not a root of any dual
Steiner polynomial, $\omega_j^i$, $j=0,\dots,n$, (we set $\omega_n=1$)
cannot be dual querma\ss integrals of any pair of star bodies, and hence,
by Theorem \ref{CharacterizationDualMixedVolumes1},
$(\omega_0^i,\dots,\omega_{n-1}^i,1)\not\in\inter C_{a,b}^n$ for any
$0<a<b$. Moreover, there exists no $\lambda>0$ such that
$\omega_j^i=\lambda^j\omega_0^i$.

On the other hand, since $\sigma$ is continuous, then
$(\omega_0^i,\dots,\omega_{n-1}^i,1\bigr)_{i\in\N}$ is a convergent
sequence and
\[
\lim_{i\to\infty}(\omega_0^i,\dots,\omega_{n-1}^i,1)
=:(\omega_0,\dots,\omega_{n-1},1)=\sigma(z,\overline{z},\Gamma)\not\in\inter
C_{a,b}^n
\]
for any $0<a<b$. Moreover, if there exists $\lambda>0$ such that
$\omega_j=\lambda^j\omega_0$ for all $j=0,\dots,n$, then
$\omega_j=\Wd_j(\lambda K,K)$ for some $K\in\S^n_0$, and thus $z$ would be
a root of the dual Steiner polynomial $\f{\lambda K}{K}(z)$, i.e.,
$z\in\R_{<0}$, which is not possible. Since this holds for any
($n-2$)-tuple $\Gamma$, we can conclude that $z\not\in\rd(n)$. It shows
that $C$ is closed and concludes the proof of (b).

\smallskip

Finally we show (c).

If $\gamma\in\rd(n)$, there exists a dual Steiner polynomial $\f{K}{L}(z)$
for $K,L\in\S^n_0$, such that $\f{K}{L}(\gamma)=0$. Then, by Proposition
\ref{p:derivative_integral} ii), we know there are star bodies
$K'',L''\in\S^{n+1}_0$ satisfying
\[
\dfrac{\dlat\f{K''}{L''}}{\dlat z}(z)=\f{K}{L}(z).
\]

Let $\gamma_1,\dots,\gamma_{n+1}$ be the roots of $\f{K''}{L''}(z)$.
Lucas' theorem (see e.g. \cite[Theorem~(6,1)]{Mard}) states that the roots
of the derivative of a polynomial lie in the convex hull of the roots of
the polynomial, and thus we get that
\[
\gamma\in\conv\{\gamma_1,\dots,\gamma_{n+1}\}\subset\rd(n+1),
\]
because $\rd(n+1)$ is convex.
\end{proof}

The different behavior of the roots of dual Steiner polynomials with
respect to the Steiner polynomial shows up also in the stability. We
recall that real polynomials whose zeros all have negative real part are
called {\em stable} or {\em Hurwitz}. In the next proposition we show
that, contrary to the classical case (cf. \cite[Proposition~1.3]{HHCS}),
$\rd(n)\subset\bigl\{z\in \C^+:\Real(z)<0\bigr\}$ if and only if $n=2$.

\begin{proposition}\label{p:r(2)_r(3)}
$\rd(2)=\left\{z\in\C^+:\Real(z)<0\right\}$. Moreover, a negative real
number is root of $\f{K}{L}(z)$ for $K,L\in\S^2_0$ if and only if $K,L$
are dilates. For $n\geq 3$ there exist non-stable dual Steiner
polynomials.
\end{proposition}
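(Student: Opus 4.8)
\textbf{Plan for the proof of Proposition \ref{p:r(2)_r(3)}.}
The plan is to deal with the three assertions in turn. For the equality $\rd(2)=\{z\in\C^+:\Real(z)<0\}$, I would write a general $2$-dimensional dual Steiner polynomial as
\[
\f{K}{L}(z)=\Wd_0(K,L)+2\Wd_1(K,L)z+\Wd_2(K,L)z^2,
\]
a real quadratic with positive coefficients, so all its roots automatically have non-positive real part and $0$ is never a root; moreover the roots are real exactly when the discriminant is non-negative, i.e.\ when $\Wd_1(K,L)^2\ge\Wd_0(K,L)\Wd_2(K,L)$, which by the dual Aleksandrov–Fenchel inequality \eqref{e:special_dual_af} (with $i=0,j=1,k=2$) forces equality there and hence $L=\lambda K$. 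This already yields the ``moreover'' clause: a negative real root occurs only for dilates. For the reverse inclusion I would take an arbitrary $z=a+b\im$ with $a<0$ and exhibit $K,L\in\S^2_0$ with this root: a dilate pair gives the real point $a$ (with $b=0$), and for $b>0$ I would invoke Theorem \ref{CharacterizationDualMixedVolumes1} to realize the required triple $(\Wd_0,\Wd_1,\Wd_2)=\sigma(z,\overline z)$ (up to the $\binom{2}{i}$ normalization) as dual quermass integrals, checking that this triple lies in the interior of some cone $C_{a',b'}^2$; equivalently, by the remark following that theorem, checking that $\Wd_1/\Wd_0$ lies strictly between $\min$ and $\max$ of $t$ on an interval, which amounts to the strict inequality $\Wd_1^2<\Wd_0\Wd_2$, i.e.\ $a^2+b^2>\cdots$, holding precisely when $b\ne 0$. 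Since $(z-\gamma_0)(z-\gamma_1)$ with $\gamma_0\gamma_1=\Wd_0/\Wd_2>0$ and $\gamma_0+\gamma_1=-2\Wd_1/\Wd_2<0$ has its roots either both negative real or a conjugate pair with negative real part, every such $z$ is attainable.

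For the last assertion that non-stable dual Steiner polynomials exist for $n\ge 3$, I would use monotonicity in the dimension (Theorem \ref{t:cone_half-open_monotone}(c)), so it suffices to produce one in dimension $3$. The idea is to build $K,L\in\S^3_0$ whose dual Steiner polynomial has a root with non-negative real part; concretely one wants a triple-plus-one of dual quermass integrals realizing a polynomial of the form $(z^2-\epsilon z+c)(z+d)$ with $\epsilon>0$ small, $c,d>0$, whose coefficients are all positive (possible for $\epsilon$ small since the product expands to $z^3+(d-\epsilon)z^2+(c-\epsilon d)z+cd$, and $d-\epsilon>0$, $c-\epsilon d>0$ for $\epsilon$ small). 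Then appeal to Theorem \ref{CharacterizationDualMixedVolumes1}: one must verify that the resulting normalized $4$-tuple $(\Wd_0,\Wd_1,\Wd_2,\Wd_3)$ lies in the interior of $C_{a,b}^3$ for suitable $0<a<b$ — equivalently that $(\Wd_1/\Wd_0,\Wd_2/\Wd_0,\Wd_3/\Wd_0)$ is in the interior of the cyclic body of some $[a,b]$ — and is not of the degenerate dilate form. The main obstacle is precisely this membership check: one needs the three ratios to be ``moment-like'', i.e.\ genuinely realizable as $\int t^i\,\dlat\mu$ for a measure with full support, which by \eqref{t:polynom>0} means every polynomial $\sum c_it^i$ positive on $[a,b]$ pairs positively with the tuple. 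I would handle it by choosing the root configuration so that the associated Hankel conditions of Theorem \ref{CharacterizationDualMixedVolumes2} (for $m=3$, the odd case: $a_{j,k}=\omega_{j+k+1}-a\omega_{j+k}$ and $b_{j,k}=b\omega_{j+k}-\omega_{j+k+1}$ positive definite, here $1\times1$ blocks plus their $2\times2$ determinants) hold strictly, which reduces to a finite set of explicit inequalities in $\epsilon,c,d,a,b$ that are satisfied for $\epsilon$ small, $b$ large, $a$ small. This makes the construction concrete and completes the proof via part (c) of Theorem \ref{t:cone_half-open_monotone}.
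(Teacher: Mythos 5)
Your treatment of the $n=2$ part follows the paper's proof closely: both arguments read the roots off the explicit quadratic formula, use the dual Aleksandrov--Fenchel inequality $\Wd_1^2\leq\Wd_0\Wd_2$ (with equality iff $K,L$ are dilates) to get the ``moreover'' clause, and then reverse the construction by taking $(\omega_0,\omega_1,\omega_2)=(a^2+b^2,-a,1)$ and observing that the strict inequality $\omega_1^2<\omega_0\omega_2$ is exactly the condition placing $(\omega_1/\omega_0,\omega_2/\omega_0)$ in the interior of the convex hull of the parabola, hence (by Theorem \ref{CharacterizationDualMixedVolumes1}) realizable as dual quermass integrals. Your phrasing of the membership check as ``$\Wd_1/\Wd_0$ between $\min$ and $\max$ of $t$'' is not quite right -- the interior condition is that the point lies strictly above the parabola and below a chord, which reduces to $\omega_1^2<\omega_0\omega_2$ after letting $b\to\infty$ -- but the end conclusion is the same as the paper's. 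For the instability claim with $n\geq 3$ your route genuinely differs: the paper simply invokes the Li\'enard--Chipart stability criterion to detect a dual Steiner polynomial in dimension $3$ with a root of positive real part, whereas you propose to construct a non-stable cubic $(z^2-\epsilon z+c)(z+d)$ directly and then certify its coefficients as dual quermass integrals via the Hankel conditions of Theorem \ref{CharacterizationDualMixedVolumes2}, reducing $n>3$ to $n=3$ by monotonicity (Theorem \ref{t:cone_half-open_monotone}(c)). Both are sketches that defer the final arithmetic; the paper's is shorter but relies on an external stability test, while yours is more self-contained and constructive, at the cost of an explicit Hankel verification you have not carried out but which is plausible (e.g., one can take $c=d^2=1$ and $\epsilon$ small, then tune $a$ small and $b$ large to make the $1\times1$ and $2\times2$ Hankel blocks strictly positive). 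Your explicit use of monotonicity to cover $n>3$ is a point the paper leaves implicit.
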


\begin{proof}
The roots of $\f{K}{L}(z)=\Wd_0(K,L)+2\Wd_1(K,L)z+\Wd_2(K,L)z^2$, for
$K,L\in\S^2_0$, are
\[
\gamma_1,\gamma_2=\dfrac{-\Wd_1(K,L)\pm\sqrt{\Wd_1(K,L)^2-\Wd_0(K,L)\Wd_2(K,L)}}{\Wd_2(K,L)},
\]
and the dual Aleksandrov-Fenchel inequality \eqref{e:special_dual_af} for
$i=0,j=1,k=2$ shows that they cannot be real numbers unless $K$ and $L$
are dilates.

Next we observe that, given a triple $(\omega_0,\omega_1,\omega_2)$ of
positive numbers, the dual Aleksandrov-Fenchel inequality
$\omega_1^2<\omega_0\omega_2$ is equivalent to the fact that the pair
\[
\left(\frac{\omega_1}{\omega_0},\frac{\omega_2}{\omega_0}\right)\in\inter\conv\bigl\{(t,t^2):t>0\bigr\},
\]
which implies that $\omega_i$, $i=0,1,2$, are the dual querma\ss integrals
of two planar star bodies $K,L\in\S^2_0$.

Now, let $a+b\,\im\in\bigl\{z\in\C^+:\Real(z)<0\bigr\}$, $b>0$. Then the
positive numbers
\[
\omega_0=a^2+b^2,\quad \omega_1=-a\quad\text{ and }\quad \omega_2=1
\]
determine a polynomial $\omega_0+2\omega_1z+\omega_2z^2$ having $a+b\,\im$
as a root, and clearly satisfy the inequality
$\omega_1^2<\omega_0\omega_2$, which implies that they are dual querma\ss
integrals of two planar star bodies. Therefore $a+b\,\im\in\rd(2)$.

Finally, for $n=3$, the Li\'enard-Chipart criterion for stability of
polynomials (see e.g. \cite[Theorem~(40,3)]{Mard}) allows to check that
there are dual Steiner polynomials having roots with positive real part.
\end{proof}

Proposition \ref{p:r(2)_r(3)} implies, in particular, that the inclusion
$\rd(2)\subset\rd(3)$ between the lowest dimensional cones is strict.

\begin{remark}
For $n=3$, if $a+b\,\im\in\C^+$, $a,b>0$, is a root of a dual Steiner
polynomial $\f{K}{L}(z)$ for some $K,L\in\S^3_0$, and $-c$, $c\geq 0$, is
the real root, we immediately have the identities
\[
c-2a=3\frac{\Wd_2(K,L)}{\Wd_3(K,L)},\quad
a^2+b^2-2ac=3\frac{\Wd_1(K,L)}{\Wd_3(K,L)},\quad
c(a^2+b^2)=\frac{\Wd_0(K,L)}{\Wd_3(K,L)}.
\]
Then, inequalities \eqref{e:special_dual_af} allow to see that
$b>\sqrt{3}a$. Therefore, the cone
$\rd(3)\subseteq\left\{a+b\,\im\in\C^+:b>\sqrt{3}a\right\}$.
\end{remark}

We have seen that two different negative real numbers cannot be the roots
of a $2$-dimensional dual Steiner polynomial. The same occurs in arbitrary
dimension, which states another difference with the classical Steiner
polynomial, where this situation is possible (see
\cite[Proposition~2.3]{HHCS}).

\begin{proposition}
For any $K,L\in\S^n_0$, all roots of $\f{K}{L}(z)$ are real if and only if
they are all equal.
\end{proposition}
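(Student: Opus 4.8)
The plan is to reduce this to the scalar version of the dual Aleksandrov--Fenchel inequality. Suppose all roots $\gamma_1,\dots,\gamma_n$ of $\f{K}{L}(z)$ are real. Since $0$ is never a root and, by Lemma~\ref{l:quermass_prop}~i) together with monotonicity, $\f{K}{L}$ has only positive coefficients, every real root must be strictly negative; write $\gamma_k=-\mu_k$ with $\mu_k>0$. First I would use the factorization
\[
\f{K}{L}(z)=\Wd_n(K,L)\prod_{k=1}^n(z+\mu_k)
=\Wd_n(K,L)\sum_{i=0}^n\sy{n-i}{\mu_1,\dots,\mu_n}\,z^i,
\]
and compare with $\f{K}{L}(z)=\sum_{i=0}^n\binom{n}{i}\Wd_i(K,L)z^i$, to obtain
\[
\binom{n}{i}\frac{\Wd_i(K,L)}{\Wd_n(K,L)}=\sy{n-i}{\mu_1,\dots,\mu_n},\qquad i=0,\dots,n.
\]

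Next I would apply Newton's inequalities to the positive reals $\mu_1,\dots,\mu_n$: setting $p_j=\sy{j}{\mu_1,\dots,\mu_n}/\binom{n}{j}$, one has $p_{j-1}p_{j+1}\le p_j^2$ for $1\le j\le n-1$, with equality throughout precisely when all the $\mu_k$ coincide. Translating via the identity above, $p_{n-i}=\Wd_i(K,L)/\Wd_n(K,L)$, so Newton's inequalities read
\[
\frac{\Wd_{i-1}(K,L)}{\Wd_n(K,L)}\cdot\frac{\Wd_{i+1}(K,L)}{\Wd_n(K,L)}\le\left(\frac{\Wd_i(K,L)}{\Wd_n(K,L)}\right)^{2},
\]
i.e.\ $\Wd_{i-1}(K,L)\Wd_{i+1}(K,L)\le\Wd_i(K,L)^2$ for all $1\le i\le n-1$. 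But the dual Aleksandrov--Fenchel inequality \eqref{e:special_dual_af} with $(i-1,i,i+1)$ gives the reverse inequality $\Wd_i(K,L)^2\le\Wd_{i-1}(K,L)\Wd_{i+1}(K,L)$, and equality there forces $K$ and $L$ to be dilates. Hence equality holds in every Newton inequality, which forces $\mu_1=\dots=\mu_n$, i.e.\ all roots are equal (and, as a byproduct, $L=\mu K$). The converse is trivial: if all roots are equal they are in particular all real.

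The only point requiring care is the reduction at the start — checking that all real roots are negative (immediate from positivity of the coefficients $\binom{n}{i}\Wd_i(K,L)$) and that $\Wd_n(K,L)>0$ so the leading coefficient can be divided out (clear since $\Wd_n(K,L)=|L|>0$). I expect the main (and essentially the only) obstacle to be the bookkeeping in lining up Newton's inequalities with \eqref{e:special_dual_af} in the correct direction; once that is done, the rigidity (equality case) of the dual Aleksandrov--Fenchel inequality closes the argument. An alternative route, avoiding Newton's inequalities, would be to apply \eqref{e:special_dual_af} directly to the triple $(0,i,n)$ and use that $\f{K}{L}(z)/\f{L}{K}(z)$ have the same roots, but the Newton-inequality argument is cleaner and makes the equality analysis transparent.
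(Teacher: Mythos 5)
Your proof is correct and follows essentially the same strategy as the paper's: Newton's inequalities on the roots give $\Wd_{i-1}\Wd_{i+1}\le\Wd_i^2$, the dual Aleksandrov--Fenchel inequality \eqref{e:special_dual_af} with $(i-1,i,i+1)$ gives the reverse, and the equality case of \eqref{e:special_dual_af} forces $K,L$ to be dilates and hence all roots equal. The only cosmetic differences are that you pass to $\mu_k=-\gamma_k$ rather than keeping signs via $(-1)^j$, and you spell out the (easy) observation that real roots must be negative, which the paper silently assumes; the citation to Lemma~\ref{l:quermass_prop}~i) for positivity of the coefficients is not quite the right reference (positivity of $\Wd_i(K,L)$ is immediate from \eqref{eq:dual_quermass}), but this has no bearing on the argument.
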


\begin{proof}
We suppose there exist $K,L\in\S^n_0$ and
$\gamma_1,\dots,\gamma_n\in\R_{<0}$ such that $\f{K}{L}(\gamma_i)=0$ for
all $i=1,\dots,n$. Then, Newton inequalities (see e.g. \cite{HLP}) ensure
that the elementary symmetric functions of the $\gamma_i$'s satisfy
\[
\left(\frac{\sy{j}{\gamma_1,\dots,\gamma_n}}{\binom{n}{j}}\right)^2\geq
\frac{\sy{j-1}{\gamma_1,\dots,\gamma_n}}{\binom{n}{j-1}}\frac{\sy{j+1}{\gamma_1,\dots,\gamma_n}}{\binom{n}{j+1}}.
\]
Since
\[
\sy{j}{\gamma_1,\dots,\gamma_n}=(-1)^j\binom{n}{j}\dfrac{\Wd_{n-j}(K,L)}{\Wd_n(K,L)},
\]
the above inequality translates into
\[
\Wd_{n-j}(K,L)^2\geq\Wd_{n-j+1}(K,L)\Wd_{n-j-1}(K,L)
\]
which, together with the dual Aleksandrov-Fenchel inequalities
\eqref{e:special_dual_af} for $i=j-1$ and $k=j+1$, yields
$\Wd_{n-j}(K,L)^2=\Wd_{n-j+1}(K,L)\Wd_{n-j-1}(K,L)$. Then $K=\lambda L$
for some $\lambda>0$, and hence $\gamma_i=-\lambda$ for all $i=1,\dots,n$,
a contradiction.
\end{proof}

\end{document}